\newtheorem*{rep@theorem}{\rep@title}
\newcommand{\newreptheorem}[2]{%
\newenvironment{rep#1}[1]{%
 \def\rep@title{#2 \ref{##1}}%
 \begin{rep@theorem}}%
 {\end{rep@theorem}}}
\newtheorem{thm}{Theorem}[section]
\newtheorem{prop}[thm]{Proposition} 
\newtheorem{lem}[thm]{Lemma}
\newtheorem{cor}[thm]{Corollary}
\theoremstyle{definition}
\newtheorem{dfn}[thm]{Definition}
\newtheorem{exmpl}[thm]{Example}
\newtheorem{?}[thm]{Question}
\theoremstyle{remark}
\newtheorem{rmk}[thm]{Remark}
\newcommand{\tql}{\textquotedblleft}
\newcommand{\tqr}{\textquotedblright}
\newcommand{\noin}{\noindent}
\newcommand{\mc}{\mathcal}
\newcommand{\mb}{\mathbb}
\newcommand{\mbf}{\mathbf}
\newcommand{\openstar}{\text{\ding{73}}}
\newcommand{\peq}{\preceq}
\newcommand{\nci}{\Shortstack{. . . .}} 
\newcommand{\bb}{$\bullet\bullet$}
\begin{document}
   \def\lfhook#1{\setbox0=\hbox{#1}{\ooalign{\hidewidth
    \lower1.5ex\hbox{'}\hidewidth\crcr\unhbox0}}} 
\title{Graph products of completely positive maps}

\author{Scott Atkinson}
\thanks{The author received partial support from NSF Grant \# DMS-1362138.}

\address{Vanderbilt University, Nashville, TN, USA}
\email{scott.a.atkinson@vanderbilt.edu}

\begin{abstract}
We define the graph product of unital completely positive maps on a universal graph product of unital $C^*$-algebras and show that it is unital completely positive itself.  To accomplish this, we introduce the notion of the non-commutative length of a word, and we obtain a Stinespring construction for concatenation. This result yields the following consequences. The graph product of positive-definite functions is positive-definite. A graph product version of von Neumann's Inequality holds. Graph independent contractions on a Hilbert space simultaneously dilate to graph independent unitaries.
\end{abstract}
\maketitle

\section{Introduction}

In operator algebras, graph products unify the notions of free products and tensor products.  In particular, given a simplicial graph $\Gamma = (V,E)$ assign an algebra to each vertex. If there is an edge between two vertices then the two corresponding algebras commute with each other in the graph product; if there is no edge between two vertices then the two corresponding algeras have no relations with each other within the graph product.  Thus free products are given by edgeless graphs, and tensor products are given by complete graphs.

Such products were initially studied in the group theory context where the most prominent examples are the so-called right-angled Artin groups (RAAGs), first introduced by Baudisch in \cite{baudisch}, and right-angled Coxeter groups, first introduced by Chiswell in \cite{chiswell}.  One of the most high-profile appearances of RAAGs is their role in the article \cite{hagwis} by Haglund-Wise whose results are utilized in Agol's celebrated resolution of the virtual Haken conjecture \cite{agol}. There has been extensive work on this subject in group theory, and we cannot possibly acknowledge all of the significant contributions to the topic. A very incomplete list of some notable references in the group context are Droms's series of papers \cite{droms3,droms2,droms1},  Green's general treatment \cite{green}, Januskiewicz's representation theoretic result \cite{janus}, Valette's weak amenability result \cite{valette},  Charney's survey \cite{charney}, and Wise's book \cite{wise}.

Graph products have been recently imported into operator algebras by several authors under just about as many names.  M\l{}otkowski developed some of the theory under the name \tql$\Lambda$-free probability\tqr in the context of non-commutative probability in \cite{mlot}.  In \cite{spewys}, Speicher-Wysocza\'nski revived M\l{}otkowski's work, looking at the related cumulant combinatorics and calling the idea \tql$\varepsilon$-independence.\tqr\;  Independently, in \cite{casfim}, Caspers-Fima drew inspiration directly from Green's thesis \cite{green} and took a foundational approach to graph products from both operator algebraic and quantum group theoretic perspectives.

The purpose of the present paper is to write down a graph product of unital completely positive maps and show that it is again unital completely positive in the spirit of \cite{boca}. This was done particularly for graph products of finite von Neumann algebras in Proposition 2.30 of \cite{casfim} in order to prove that the Haagerup property is preserved under taking graph products.  This article gives the result for the much more general $C^*$-algebraic setting.

The strategy for proving the main result, Theorem \ref{mainthm}, is largely combinatorial.  While there are alternative avenues potentially available (especially in light of the recent preprint \cite{davkak}), the appeal of the approach in this article is the development of some tools addressing the less-familiar combinatorics presented by graph products. In particular, in \S\S\ref{ncl}, we introduce the notion of the \emph{non-commutative length} of a reduced word in a graph product (see Definition \ref{nclength}). Just as the length of a word is an indispensable tool in the theory of free products, the non-commutative length of a word in a graph product can be used analogously to organize arguments by ignoring, in a sense, letters that commute.  In fact, in the free product (edgeless graph) case, the two notions essentially coincide--see Remark \ref{lengths}. Additionally, in \S\S\ref{stine}, we develop a Stinespring construction for concatenation within a finite subset of words in a graph product. This construction yields a version of Schwarz's Inequality for our setting.   Immediately after the proof of Theorem \ref{mainthm}, in \S\S\ref{tpe}, we illustrate how our proof strategy applies in the complete graph case; this gives a new combinatorial proof of the fact that the tensor product of ucp maps on a max tensor product is again ucp.

Following the festival of induction in \S\ref{gpmaps}, we record several consequences in \S\ref{cons}.  The first is Corollary \ref{choda}, giving the graph product analog of Choda's main result from \cite{choda} .  Next, we present Theorem \ref{posdef} which states that the graph product of positive-definite functions on a graph product of groups is itself positive-definite.  We conclude the paper with some results regarding unitary dilation in the graph product context.  In particular, we obtain graph product versions of the Sz.-Nagy-Foia\lfhook{s} dilation theorem (Theorem \ref{dil}), von Neumann's Inequality (Corollary \ref{vnineq}), and unitary dilation of graph independent contractions (Theorem \ref{gpdilation}).

\section{Preliminaries}

Fix a simplicial (i.e. undirected, no single-vertex loops, at most one edge between vertices) graph $\Gamma = (V, E)$, where $V$ denotes the set of vertices of $\Gamma$ and $E \subset V \times V$ denotes the set of edges of $\Gamma$. Given discrete groups $\left\{G_v\right\}_{v \in V}$ one can define the graph product of the $G_v$'s as follows.

\begin{dfn}[\cite{green, casfim}]
The graph product $\bigstar_\Gamma G_v$ is given by the free product $* G_v$ modulo the relations $[g,h] =1$ whenever $g \in G_v, h \in G_w$ and $(v,w) \in E$.
\end{dfn}

In the context of $C^*$-algebras, per usual, there are two flavors of graph products: universal and reduced. Some set-up is in order before presenting these constructions.  Both \cite{mlot} and \cite{casfim} present cosmetically differing constructions of the same objects, but since we are adhering to the language of graphs, we will draw primarily from the discussion in \cite{casfim}.

When working with graph products, the bookkeeping can be done by considering words with letters from the vertex set $V$.  Such words are given by finite sequences of elements from $V$ and will be denoted with bold letters.  In order to encode the commuting relations given by $\Gamma$, we consider the equivalence relation generated by the following relations.
\begin{align*}
(v_1,\dots, v_i, v_{i+1},\dots, v_n) &\sim (v_1,\dots, v_i, v_{i+2}, \dots, v_n) &\text{if} &&v_i = v_{i+1}\\
(v_1,\dots, v_i, v_{i+1},\dots, v_n) &\sim (v_1,\dots, v_{i+1}, v_i,\dots, v_n) &\text{if} &&(i,i+1) \in E.
\end{align*}
The concept of a reduced word is central to the theory of graph products.  The following definition is Definition 3.2 of \cite{spenic} in graph language; the equivalent definition in \cite{casfim} appears differently.

\begin{dfn}\label{redv}
A word $\mbf{v} = (v_1,\dots,v_n)$ is \emph{reduced} if whenever $v_k = v_l, k < l$, then there exists a $p$ with $k< p < l$ such that $(v_k, v_p) \notin E$.  Let $\mc{W}_\text{red}$ denote the set of all reduced words.  We take the convention that the empty word is reduced.
\end{dfn}

\begin{prop}[\cite{green,casfim}]\label{reducedlemma}\hspace*{\fill}
\begin{enumerate}
\item Every word $\mbf{v}$ is equivalent to a reduced word $\mbf{w} = (w_1,\dots, w_n)$.  (We let $|\mbf{w}|=n$ denote the \emph{length} of the reduced word.)
\item If $\mbf{v} \sim \mbf{w}\sim\mbf{w}'$ with both $\mbf{w}$ and $\mbf{w}'$ reduced, then the lengths of $\mbf{w}$ and $\mbf{w}'$ are equal and $\mbf{w}' = (w_{\sigma(1)},\dots,w_{\sigma(n)})$ is a permutation of $\mbf{w}$.  Furthermore, this permutation $\sigma$ is unique if we insist that whenever $w_k = w_l, k< l$ then $\sigma(k) < \sigma(l)$.
\end{enumerate}
\end{prop}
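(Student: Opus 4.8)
The plan is to treat the two clauses separately, handling (1) by a direct length-reduction induction and (2) by recognizing the two generating moves as a confluent rewriting system.

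For (1), I would induct on the length $n$ of $\mathbf{v}$. If $\mathbf{v}$ is already reduced there is nothing to do, so suppose not: by Definition \ref{redv} there are indices $k < l$ with $v_k = v_l$ and $(v_k, v_p) \in E$ for every $p$ with $k < p < l$. The second relation then lets me swap $v_k$ successively to the right past $v_{k+1}, \dots, v_{l-1}$ (each swap is legal since $v_k$ commutes with every intermediate letter), producing an equivalent word in which the two occurrences of $v_k$ are adjacent. The first relation now deletes one of them, yielding an equivalent word of length $n-1$ to which the inductive hypothesis applies. The base cases $n \le 1$ are vacuously reduced.

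The crux is (2), and the key preliminary observation is that, because $\Gamma$ is simplicial (no single-vertex loops), the swap relation never applies to two \emph{equal} letters; hence every swap preserves the relative order of any two occurrences of the same vertex. I would exploit this by separating the two moves. Regard the swap relation alone as generating an equivalence $\approx$ (the trace, or partially commutative, equivalence), and regard the deletion relation as a length-decreasing rewriting rule on $\approx$-classes: an $\approx$-class rewrites to another by deleting one of two equal letters that some representative places adjacently. A word is reduced in the sense of Definition \ref{redv} precisely when its $\approx$-class admits no such rewrite, i.e. no representative has two adjacent equal letters, which is exactly the normal-form condition for this system. Part (1) shows the rewriting terminates. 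By Newman's lemma it then suffices to prove local confluence: if an $\approx$-class admits two deletions, their results share a common reduct. When the two deletions involve four distinct letter-occurrences they act on disjoint data and plainly commute, since removing letters can only destroy, never create, an obstruction between another gatherable pair. The remaining case is two overlapping deletions, concerning three or more occurrences of a single vertex $v$ that can all be gathered together; here the idempotent nature of the move ($vv \rightsquigarrow v$), together with the fact that equal letters never cross one another, forces both choices to collapse to the same $\approx$-class. Verifying this overlapping case carefully — keeping track of which intermediate letters commute with $v$ — is the main obstacle I anticipate, and it is where the no-self-loop hypothesis does its real work.

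Granting local confluence, Newman's lemma yields a unique normal-form $\approx$-class, so two reduced words $\mathbf{w} \sim \mathbf{w}'$ represent the same $\approx$-class; that is, they are related by swaps alone. In particular they have the same multiset of letters, hence equal length, and $\mathbf{w}'$ is a permutation of $\mathbf{w}$. For the uniqueness of $\sigma$, once $\mathbf{w}$ and $\mathbf{w}'$ are known to be swap-equivalent a permutation with $w'_i = w_{\sigma(i)}$ exists because the letter-multisets agree, and the only freedom in choosing $\sigma$ is how it matches up the several occurrences of a repeated vertex. Imposing $\sigma(k) < \sigma(l)$ whenever $w_k = w_l$ and $k < l$ selects, within each block of equal letters, the unique order-preserving matching; since swaps already preserve the relative order of equal letters, this canonical choice is both consistent and unique.
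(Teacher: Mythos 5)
First, a point of comparison: the paper offers no proof of Proposition \ref{reducedlemma} at all---it is imported from \cite{green} and \cite{casfim}---so your argument stands or falls on its own rather than against an in-text proof. Your part (1) is correct and complete: negating Definition \ref{redv} hands you a pair of equal letters all of whose intermediate letters commute with them, the swaps make the pair adjacent, the deletion drops the length by one, and induction on length finishes. For part (2), your structural reduction (swap-equivalence $\approx$ as the ambient object, deletion as a terminating rewriting rule on $\approx$-classes, Newman's lemma) is a legitimate route, and your key observation---since $\Gamma$ has no loops, a swap never exchanges two occurrences of the same vertex, so the relative order of equal letters is an $\approx$-invariant---is exactly where simpliciality does its work.

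However, the proof has a genuine gap, and it sits exactly where you flagged it: the overlapping case of local confluence is asserted (``the idempotent nature of the move \ldots forces both choices to collapse''), not proved, and Newman's lemma gives you nothing until it is proved. Concretely, what must be shown is: if $x_1 < x_2 < x_3$ are occurrences of a vertex $v$ (occurrences of a fixed vertex are totally ordered by your invariance observation, which also shows that overlapping gatherable pairs must be consecutive, since $x_2$ obstructs the pair $(x_1,x_3)$) and both $(x_1,x_2)$ and $(x_2,x_3)$ are gatherable, then after merging $x_1,x_2$ the pair consisting of the merged occurrence and $x_3$ is still gatherable, and both one-step reducts rewrite to the common class in which all three occurrences are merged. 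A workable way to close this is to introduce the $\approx$-invariant partial order on occurrences generated by ``$p$ precedes $q$ and their vertices coincide or do not commute,'' prove that a pair of equal letters is gatherable iff no occurrence lies strictly between them in this order, and prove that an $\approx$-class is determined by its occurrences together with this order on dependent pairs. Granted these, the overlap closes: an obstruction $z$ to gathering the merged pair with $x_3$ yields a chain of dependent occurrences from $x_1$ to $z$ avoiding $x_2$; its first step $w$ has vertex equal to $v$ or not commuting with $v$, hence $w$ is comparable with $x_2$; gatherability of $(x_1,x_2)$ rules out $x_1 < w < x_2$, so $x_2 < w \leq z$, and then $x_2 < z < x_3$ contradicts gatherability of $(x_2,x_3)$. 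Note that the same three lemmas are what is needed to make rigorous your one-line justifications in the disjoint case (``removing letters can only destroy, never create, an obstruction'') and your claim that reducedness in the sense of Definition \ref{redv} coincides with being a normal form of the rewriting system; neither is purely definitional. With those supplied, your trace-theoretic argument is sound and yields a self-contained proof of a statement the paper leaves entirely to the literature.
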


\noin Let $\mc{W}_\text{min}$ be a set of representatives of every reduced word such that each equivalence class has exactly one representative in $\mc{W}_\text{min}$.  An element of $\mc{W}_\text{min}$ is called a \emph{minimal word}.  

\subsection{Universal graph products} To define universal graph products we follow the discussion from \cite{mlot} which gives a more constructive definition compared to the equivalent definition appearing in \cite{casfim}.

\begin{dfn}\label{unidef}
Given a graph $\Gamma = (V,E)$ and unital $C^*$-algebras $\mc{A}_v$ for every $v \in V$, the \emph{universal graph product $C^*$-algebra} is the unique unital $C^*$-algebra $\bigstar_\Gamma \mc{A}_v$ together with unital $*$-homomorphisms $\iota_v: \mc{A}_v \rightarrow \bigstar_\Gamma \mc{A}_v$  satisfying the following universal property.
\begin{enumerate}
\item $\iota_v(a)\iota_w(b) = \iota_w(b)\iota_v(a)$ whenever $a \in \mc{A}_v, b \in \mc{A}_w, (v,w) \in E$;
\item For any unital $C^*$-algebra $\mc{B}$ with $*$-homomorphisms $f_v: \mc{A}_v \rightarrow \mc{B}$ such that $f_v(a)f_w(b) = f_w(b)f_v(a)$ whenever $a \in \mc{A}_v, b \in \mc{A}_w, (v,w) \in E$, there exists a unique $*$-homomorphism $\bigstar_\Gamma f_v: \bigstar_\Gamma \mc{A}_v \rightarrow \mc{B}$ such that $\bigstar_\Gamma f_v \circ \iota_{v_0} = f_{v_0}$ for every $v_0 \in V$.
\end{enumerate}
\end{dfn}

The graph product $\bigstar_\Gamma \mc{A}_v$ is the universal $C^*$-algebraic free product $*_{v \in V} \mc{A}_v$ modulo the ideal generated by the commutation relations encoded in the graph $\Gamma$.

The following constructive description of universal graph product $C^*$-algebras also appears in \cite{mlot}.  Ignoring the norm topology, we can consider the universal $*$-algebraic graph product of the $\mc{A}_v$'s, $\openstar_\Gamma \mc{A}_v$, as the universal $*$-algebraic free product of the $\mc{A}_v$'s modulo the ideal generated by the commutation relations coming from the graph $\Gamma$. For each $v \in V$ fix a state $\varphi_v \in S(\mc{A}_v)$, and let $\mathring{\mc{A}}_v = \ker(\varphi_v)$. For each $\mbf{v} = (v_1,\dots,v_n) \in \mc{W}_\text{min}$ let $\mc{A}_\mbf{v} = \mathring{\mc{A}}_{v_1}\otimes \cdots \otimes \mathring{\mc{A}}_{v_n}$ with $\mc{A}_e = \mb{C}1$ where $e$ is the empty word.  We can identify $\openstar_\Gamma \mc{A}_v$ (as a vector space) with the following direct sum of tensor products. \[\openstar_\Gamma \mc{A}_v = \bigoplus_{\mbf{v} \in \mc{W}_\text{min}} \mc{A}_\mbf{v}\]  Then the $C^*$-algebraic graph product $\bigstar_\Gamma \mc{A}_v$ is the $C^*$-envelope of the $*$-algebraic graph product $\openstar_\Gamma \mc{A}_v$.  Compare this with the discussion in Sections 1.2 and 1.4 of \cite{vodyni}.

\begin{dfn}
A \emph{reduced word} $a \in \openstar_\Gamma \mc{A}_v$ is an element of the form $a = a_1\cdots a_m$ where $a_k \in \mathring{\mc{A}_{v_k}}$ and $(v_1,\dots,v_m) \in \mc{W}_\text{red}$.  In such an instance we write $(v_1,\dots, v_m) = \mbf{v}_a$ and say $|a| = m$--denoting the \emph{length} of $a$ (well-defined by Proposition \ref{reducedlemma}).  Accepting the common risks of abusing notation, we let $\mc{W}_\text{red}$ also denote the set of reduced words in $\openstar_\Gamma \mc{A}_v$.  The linear span of $\mc{W}_\text{red} \cup \left\{1\right\}$ is dense in $\bigstar_\Gamma \mc{A}_v$ (see \cite{mlot}). 
\end{dfn}

\subsection{Reduced graph products} The following construction can be found in \cite{casfim}. The reduced graph product of $C^*$-algebras is defined in the presence of states and depends on the construction of a graph product of Hilbert spaces, defined in a way similar to that of the definition of a free product of Hilbert spaces.  

For each $v \in V$ let $\mc{H}_v$ be a Hilbert space with a distinguished unit vector $\xi_v \in \mc{H}_v$.  Put $\mathring{\mc{H}}_v := \mc{H}_v \ominus \mb{C}\xi_v$.  Given $\mbf{v} = (v_1,\dots,v_n) \in \mc{W}_\text{red}$, define \[\mc{H}_\mbf{v} := \mathring{\mc{H}}_{v_1} \otimes \cdots \otimes \mathring{\mc{H}}_{v_n}.\]  If $\mbf{v}, \mbf{w} \in \mc{W}_\text{red}$ with $\mbf{v}\sim\mbf{w}$ then by Proposition \ref{reducedlemma} there is a uniquely determined unitary $\mc{Q}_{\mbf{v},\mbf{w}}: \mc{H}_v \rightarrow \mc{H}_w$.  Since each reduced word $\mbf{v}$ has a unique representative $\mbf{v}' \in \mc{W}_\text{min}$, we write $\mc{Q}_\mbf{v}$ instead of $\mc{Q}_{\mbf{v},\mbf{v}'}$.  

\begin{dfn}
Define the graph product Hilbert space $(\bigstar_\Gamma\mc{H}_v,\Omega)$ as follows. \[\bigstar_\Gamma\mc{H}_v := \mb{C}\Omega \oplus \bigoplus_{\mbf{w} \in \mc{W}_\text{min}} \mc{H}_\mbf{w}\] 
\end{dfn}

Next, given $v_0 \in V$ we define a canonical (left) representation of $B(\mc{H}_{v_0})$ in $B(\bigstar_\Gamma \mc{H}_v)$.  Let $\mc{W}_l(v_0)\subset \mc{W}_\text{min}$ be the set of minimal words $\mbf{w}$ such that $v_0\mbf{w}$ is still reduced.  Put \[\mc{H}_l(v_0):= \mb{C}\Omega \oplus \bigoplus_{\mbf{w} \in \mc{W}_l(v_0)} \mc{H}_\mbf{w}.\]  We have that $\bigstar_\Gamma \mc{H}_v \cong \mc{H}_{v_0} \otimes \mc{H}_l(v_0)$ via the unitary $U_l(v_0)$ defined as follows.
\begin{align*}
U_l(v_0): \mc{H}_{v_0} \otimes \mc{H}_l(v_0) &\rightarrow \bigstar_\Gamma \mc{H}_v\\
 \xi_{v_0} \otimes \Omega &\mapsto \Omega\\
 \mathring{\mc{H}}_{v_0} \otimes \Omega &\mapsto \mathring{\mc{H}}_{v_0}\\
 \xi_{v_0} \otimes \mc{H}_\mbf{w} &\mapsto \mc{H}_\mbf{w}\\
\mathring{\mc{H}}_{v_0}\otimes \mc{H}_\mbf{w} &\mapsto \mc{Q}_{v_0\mbf{w}}(\mathring{\mc{H}}_{v_0} \otimes \mc{H}_\mbf{w})
\end{align*}
\noin Then we define $\lambda_{v_0}: B(\mc{H}_{v_0}) \rightarrow B(\bigstar_\Gamma \mc{H}_v)$ by \[\lambda_{v_0}(x) = U_l(v_0)(x\otimes 1)U_l(v_0)^*.\]

\begin{dfn}
For each $v \in V$ let $\mc{A}_v$ be a unital $C^*$-algebra, let $\varphi_v \in S(\mc{A}_v)$ be a state, and let $(\pi_v, \mc{H}_v, \xi_v)$ be the corresponding GNS triple.  The \emph{(left) reduced graph product $C^*$-algebra} is denoted $\bigstar_\Gamma (\mc{A}_v, \varphi_v)$ and is defined to be the $C^*$-subalgebra in $B(\bigstar_\Gamma \mc{H}_v)$ generated by $\left\{\lambda_v(\pi_v(\mc{A}_v))\right\}_{v \in V}$.  The vector state $\left\langle \cdot \Omega| \Omega\right\rangle$ on $\bigstar_\Gamma (\mc{A}_v, \varphi_v)$ is the reduced graph product state denoted $\bigstar_\Gamma \varphi_v$.  
\end{dfn}

\begin{rmk}\label{B}
As outlined in \cite{casfim}, one can analogously construct right representations $\rho_{v_0}: B(\mc{H}_{v_0}) \rightarrow B(\bigstar_\Gamma \mc{H}_v)$ and subsequently define a right reduced graph product $C^*$-algebra.  
\end{rmk}


\subsection{Graph independence}

We briefly discuss graph products in the context of non-commutative probability.  Compare this discussion with \cite{mlot,spewys}.

\begin{dfn}
A \emph{non-commutative probability space} is given by a pair $(\mc{A}, \varphi)$ where $\mc{A}$ is a unital $C^*$-algebra and $\varphi \in S(\mc{A})$ is a state on $\mc{A}$.
\end{dfn}

\begin{dfn}
Given a non-commutative probability space $(\mc{A},\varphi)$ and a graph $\Gamma = (V,E)$, let $\left\{\mc{A}_v \right\}_{v\in V} \subset \mc{A}$ be a family of unital $C^*$-subalgebras.  Put $\mathring{\mc{A}}_v:= \ker(\varphi|_{\mc{A}_v})$.  An element $a \in C^*(\cup_{v\in V} \mc{A}_v)$ is \emph{reduced with respect to $\varphi$} if $a = a_1\cdots a_m$ where $a_j \in \mathring{\mc{A}}_{v_j}$ for $1 \leq j \leq m$ and $(v_1,\dots,v_m)$ is reduced in the sense of Definition \ref{redv}.
\end{dfn}

\begin{dfn}\cite{mlot,spewys}
Given a non-commutative probability space $(\mc{A},\varphi)$ and a graph $\Gamma = (V,E)$, a family of unital $C^*$-subalgebras $\left\{\mc{A}_v\right\}_{v\in V}\subset (\mc{A},\varphi)$ is \emph{$\Gamma$ independent} (or \emph{graph independent} when context is clear) if
\begin{enumerate}
\item $(v,v') \in E \Rightarrow \mc{A}_v$ and $\mc{A}_{v'}$ commute;
\item for any $a \in C^*(\cup_{v\in V} \mc{A}_v)$ such that $a$ is reduced with respect to $\varphi, \varphi(a) = 0$.
\end{enumerate}
A family of random variables $\left\{x_v\right\}_{v\in V} \subset \mc{A}$ is $\Gamma$ independent if the family of their generated unital $C^*$-algebras $\left\{C^*(1,x_v)\right\}_{v \in V}$ is $\Gamma$ independent.
\end{dfn}

\begin{exmpl}
By construction, $\left\{\lambda_v(\pi_v(\mc{A}_v))\right\}_{v\in V} \subset (\bigstar_\Gamma (\mc{A}_v,\varphi_v), \bigstar_\Gamma \varphi_v)$ is $\Gamma$ independent.
\end{exmpl}

Consider the following analog of Lemma 5.13 of \cite{spenic}.

\begin{lem}\label{stres}
Let $(\mc{A},\varphi)$ be a non-commutative probability space.  Let $\Gamma = (V,E)$ be a graph, and let the unital subalgebras $\mc{A}_v, v \in V$, be $\Gamma$ independent in $(\mc{A}, \varphi)$.  Let $\mc{B}$ be the $C^*$-algebra generated by the $A_v$'s.  Then $\varphi|_\mc{B}$ is uniquely determined by $\varphi|_{\mc{A}_v}$ for all $v \in V$.
\end{lem}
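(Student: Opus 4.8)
The plan is to show that the number $\varphi(b)$ attached to any $b \in \mc{B}$ can be computed by an explicit recursive procedure that refers only to the restrictions $\varphi|_{\mc{A}_v}$. Since $\mc{B} = C^*(\cup_{v \in V} \mc{A}_v)$ and each $\mc{A}_v$ is a unital $*$-subalgebra, the linear span of $\left\{1\right\}$ together with all products $a_1 \cdots a_m$ with $a_j \in \mc{A}_{v_j}$ is a dense $*$-subalgebra of $\mc{B}$; because $\varphi$ is continuous, it suffices to determine $\varphi$ on such products. The first step is to center each factor: writing $a_j = \varphi(a_j)1 + \mathring{a}_j$ with $\mathring{a}_j := a_j - \varphi(a_j)1 \in \mathring{\mc{A}}_{v_j}$ and expanding multilinearly, I obtain $\varphi(a_1\cdots a_m)$ as a finite linear combination—whose coefficients are products of the scalars $\varphi(a_j)$, hence visibly determined by the restrictions—of terms of the form $\varphi(\mathring{a}_{j_1}\cdots \mathring{a}_{j_k})$ in which every factor is centered. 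Thus it is enough to determine $\varphi$ on products of centered elements.

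I would then argue by induction on the number $k$ of centered factors. For $k = 0$ the product is $1$ and $\varphi(1) = 1$; and if the underlying word $(w_1,\dots,w_k)$ happens to be reduced in the sense of Definition \ref{redv}, then graph independence gives $\varphi(\mathring{a}_1 \cdots \mathring{a}_k) = 0$ outright. The remaining case is a product $\mathring{a}_1\cdots \mathring{a}_k$ whose word $(w_1,\dots,w_k)$ is not reduced. By the negation of Definition \ref{redv} there are indices $p < q$ with $w_p = w_q$ such that $(w_p, w_r) \in E$ for every $r$ with $p < r < q$. Using graph independence condition (1), $\mathring{a}_q$ commutes with each intermediate factor $\mathring{a}_r$, so I may slide it leftward until it sits next to $\mathring{a}_p$, producing the equal element
\[ \mathring{a}_1\cdots \mathring{a}_{p-1}\,(\mathring{a}_p \mathring{a}_q)\,\mathring{a}_{p+1}\cdots \mathring{a}_{q-1}\,\mathring{a}_{q+1}\cdots \mathring{a}_k. \]
Now $\mathring{a}_p \mathring{a}_q$ lies in the single algebra $\mc{A}_{w_p}$, so I split it as $\mathring{a}_p\mathring{a}_q = \varphi(\mathring{a}_p\mathring{a}_q)1 + c$ with $c \in \mathring{\mc{A}}_{w_p}$; the scalar $\varphi(\mathring{a}_p\mathring{a}_q)$ is a value of $\varphi|_{\mc{A}_{w_p}}$ and hence determined. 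Substituting yields two products of centered elements—one of length $k-2$ from the scalar term, and one of length $k-1$ from the term containing $c$—each determined by the inductive hypothesis.

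The main obstacle is purely combinatorial: one must check that this rewriting is well-founded, that is, that every non-reduced word genuinely admits a repeated vertex all of whose intervening letters commute with it (which is exactly the failure of the condition in Definition \ref{redv}), and that each step strictly lowers the length, so the recursion terminates. The commutations invoked are honest identities in $\mc{A}$ supplied by condition (1), so nothing is lost to approximation, and the only scalars ever introduced are values of the $\varphi|_{\mc{A}_v}$. Unwinding the recursion then exhibits each $\varphi(a_1\cdots a_m)$ as a polynomial expression in the numbers $\left\{\varphi|_{\mc{A}_v}(x) : v \in V,\ x \in \mc{A}_v\right\}$, which proves that $\varphi|_\mc{B}$ is uniquely determined by the family $\left\{\varphi|_{\mc{A}_v}\right\}_{v \in V}$.
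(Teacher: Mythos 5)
Your proof is correct. For comparison: the paper's own ``proof'' of Lemma \ref{stres} consists of a single sentence citing (the proof of) Lemma 1 in \cite{mlot}, and the argument that citation encapsulates is essentially the recursion you wrote out, so what you have produced is a self-contained version of a step the paper outsources. Your reduction chain is sound at every link: density of the span of products plus automatic continuity of the state reduces the problem to moments $\varphi(a_1\cdots a_m)$; centering each factor reduces it to moments of products of centered elements, with only values of the $\varphi|_{\mc{A}_v}$ appearing as coefficients; a product over a reduced word is exactly an element ``reduced with respect to $\varphi$,'' so condition (2) of graph independence kills it; and for a non-reduced word, the negation of Definition \ref{redv} hands you indices $p<q$ with $w_p=w_q$ such that every intervening letter has an edge to $w_p$, so condition (1) lets you slide $\mathring{a}_q$ against $\mathring{a}_p$, merge them inside the single algebra $\mc{A}_{w_p}$, and split off the scalar $\varphi(\mathring{a}_p\mathring{a}_q)$, which is a value of $\varphi|_{\mc{A}_{w_p}}$. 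The only point worth flagging in a final write-up is that the induction must be a \emph{strong} induction on the number $k$ of centered factors: the two products your rewriting produces (of lengths $k-1$ and $k-2$) need not have reduced words, so you invoke the hypothesis for all smaller lengths, not just $k-1$; since both lengths strictly decrease, termination is immediate and this is not a gap. What your longer route buys is an explicit, purely algebraic recursion exhibiting every moment as a polynomial in the vertex-algebra moments (which in particular reproves uniqueness of the graph product state used in Corollary \ref{choda}); what the paper's route buys is brevity, at the cost of sending the reader to M\l{}otkowski.
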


\begin{proof}
This follows directly from (the proof of) Lemma 1 in \cite{mlot}.
\end{proof}

\begin{rmk}
Although this is not the topic of the present paper,  we note that the existence of left and right (cf. Remark \ref{B}) representations on graph product Hilbert spaces sets the stage for an investigation into \tql bi-graph independence\tqr--see \cite{pof1}.
\end{rmk}

\section{Graph products of maps}\label{gpmaps}

This section presents the main result of the present article, establishing the existence of graph products of unital completely positive maps. The max tensor product and the universal free product are both examples of universal graph products; so the following result is a generalization and unification of the max tensor product and Boca's universal free product of completely positive maps appearing in \cite{boca}.


Let $\Gamma = (V, E)$ be a graph.  Let $\mc{B}$ be a unital $C^*$-algebra. For each $v \in V$,  let $\mc{A}_v$ be a unital $C^*$-algebra, and let $\theta_v: \mc{A}_v \rightarrow \mc{B}$ be a unital completely positive map with the property that if $(v,v') \in E$ then $\theta_v(\mc{A}_v)$ commutes with $\theta_{v'}(\mc{A}_{v'})$. Furthermore, for each $v_0 \in V$, fix a state $\varphi_{v_0} \in S(\mc{A}_{v_0})$, and let $\iota_{v_0}: \mc{A}_{v_0} \rightarrow \bigstar_\Gamma \mc{A}_v$ be the inclusion given in Defintion \ref{unidef}. We densely define the unital graph product map $\bigstar_\Gamma \theta_v$ with respect to the states $\varphi_v$ on $\mc{W}_\text{red} \cup \left\{1\right\}$ and extend linearly.  For $a_j \in \mathring{\mc{A}}_{v_j}:=\ker(\varphi_{v_j}), 1 \leq j \leq n, (v_1,\dots, v_n)  \in \mc{W}_\text{red}$, 
\begin{align}
\bigstar_\Gamma \theta_v \Big(\prod_{j=1}^n \iota_{v_j}(a_j)\Big) := \prod_{j=1}^n \iota_{v_j}(\theta_{v_j}(\iota_{v_j}(a_j))).\label{ucpdef}
\end{align}
From now on, we suppress the $\iota_v$'s.

\begin{thm}\label{mainthm}
The map $\bigstar_\Gamma \theta_v$ densely defined on the linear span of $\mc{W}_\text{red}\cup\left\{1\right\}$ by the relation \eqref{ucpdef} extends by continuity to a unital completely positive map $\bigstar_\Gamma \mc{A}_v \rightarrow \mc{B}$.
\end{thm}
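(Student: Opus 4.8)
The plan is to exhibit $\bigstar_\Gamma \theta_v$ as a compression of a $*$-representation wherever possible, and to reduce complete positivity to the positivity of a single Gram matrix that is then handled by induction. First I would fix a faithful unital representation $\mc B \subseteq B(\mc H)$. Since the linear span of $\mc W_{\text{red}} \cup \{1\}$ is a dense unital $*$-subalgebra of $\bigstar_\Gamma \mc A_v$, and since $a_i^* a_j$ again lies in this span whenever $a_i,a_j$ do, it suffices to prove the sesquilinear-form positivity $\sum_{i,j} \langle \bigstar_\Gamma\theta_v(a_i^* a_j)\xi_j, \xi_i\rangle \ge 0$ for all finite families of reduced words $a_1,\dots,a_m$ and vectors $\xi_1,\dots,\xi_m \in \mc H$. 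This is exactly the standard form criterion for complete positivity on the dense subalgebra. Granting it, $2$-positivity together with unitality yields the Schwarz-type bound $\|\bigstar_\Gamma\theta_v(x)\| \le \|x\|$, so the map is completely contractive and extends continuously to a ucp map on all of $\bigstar_\Gamma\mc A_v$; thus continuity is not the crux.

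Second, I would replace each $\theta_v$ by a minimal Stinespring dilation $\theta_v(a) = V_v^* \pi_v(a) V_v$, where $\pi_v: \mc A_v \to B(\mc K_v)$ is a unital $*$-representation and $V_v: \mc H \to \mc K_v$ is an isometry (unitality forces $V_v^* V_v = 1$). If the ranges $\pi_v(\mc A_v)$ commuted along edges, one could simply invoke the universal property of Definition \ref{unidef} to assemble a single representation $\Pi$ of $\bigstar_\Gamma\mc A_v$ and realize $\bigstar_\Gamma\theta_v = W^*\Pi(\cdot)W$, finishing immediately as in the free-product argument of \cite{boca}. The essential new difficulty of the graph setting is precisely that the dilations $\pi_v$ need not commute along edges even though the compressions $\theta_v(\mc A_v)$ do; so no global representation of the graph product is available, and the commuting relations must instead be tracked by hand.

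To handle this I would build a Stinespring construction for concatenation: for the finite set of reduced words at hand (together with their prefixes under the word equivalence of Proposition \ref{reducedlemma}), assemble a Hilbert space from tensor products of the defect spaces $\mathring{\mc K}_v := \mc K_v \ominus V_v\mc H$ indexed by minimal words, equipped with connecting maps that implement multiplying a new letter onto a word. The decomposition $\mc K_v = V_v\mc H \oplus \mathring{\mc K}_v$ is what lets one expand $a_i^* a_j$ into its reduction: when the innermost surviving letters of $a_i^*$ and $a_j$ share a vertex $v$, the $V_v V_v^*$ part contributes the scalar $\varphi_v$-defect and exposes the next pair of letters, while the $\mathring{\mc K}_v$ part produces a genuine new reduced letter. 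This yields the Schwarz inequality relating $\bigstar_\Gamma\theta_v(a_i^* a_j)$ to inner products of partially reduced words, which is the engine of the induction.

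The core of the proof is then an induction on a length-type invariant that discounts letters commuting past their outer neighbors (the non-commutative length developed in \S\S\ref{ncl}), chosen so that such letters contribute nothing to the complexity. In the inductive step I would peel the outer non-commuting letters simultaneously off all the $a_i$: letters lying on edges, whose images commute in $\mc B$, can be factored out of the whole Gram matrix using the hypothesis that the $\theta_v(\mc A_v)$ commute along edges, while the non-commuting outer letters are absorbed by the concatenation isometries, reducing the Gram matrix of $\{a_i\}$ to a matricial compression of the Gram matrix of strictly shorter words, positive by the inductive hypothesis; the base case of length zero or one is just positivity of the states $\varphi_v$ and complete positivity of the individual $\theta_v$. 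I expect the main obstacle to be organizing this simultaneous peeling: the reduction of $a_i^* a_j$ couples the two indices (the surviving middle letters, the scalars produced by cancellation, and the edge-letters that may be permuted all depend on both words), so the argument cannot proceed term-by-term and must instead show that the entire matrix $[\bigstar_\Gamma\theta_v(a_i^* a_j)]$ factors as $T^* (\text{positive}) T$ at each stage. Verifying that this factorization is well defined with respect to the word equivalence and compatible with the edge commutations is where the combinatorial bookkeeping, and hence the real work, lies.
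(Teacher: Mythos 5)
Your high-level skeleton---reduce to the Gram-matrix inequality on finite sets of reduced words, build a concatenation Hilbert space, extract a Schwarz-type inequality, then induct while peeling letters organized by non-commutative length---matches the paper's architecture. But the pillar your argument rests on is different from the paper's, and as described it has a genuine gap. You build the concatenation space out of the individual Stinespring dilations $\theta_v = V_v^*\pi_v(\cdot)V_v$, tensoring the defect spaces $\mathring{\mc{K}}_v = \mc{K}_v \ominus V_v\mc{H}$ along minimal words. Because the amalgamation here is over the subspace $V_v\mc{H}\cong\mc{H}$ rather than over a distinguished unit vector, the plain tensor product $\mathring{\mc{K}}_{w_1}\otimes\cdots\otimes\mathring{\mc{K}}_{w_n}$ admits no natural definition of your ``connecting maps'': to append a letter $a \in \mc{A}_v$ to a word one must feed vectors of the previous defect space into $V_v$ or $\pi_v(a)$, which only accept vectors of $\mc{H}$ and $\mc{K}_v$ respectively. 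The only coordinate-free way to define inner products among word-indexed vectors is via the form $\langle \Theta(x^*y)\xi \,|\, \eta\rangle$---but the positive semi-definiteness of that form is exactly the inequality \eqref{goal} you are trying to prove, so the construction is circular. The paper breaks this circularity by never dilating the individual $\theta_v$ at all: its Stinespring construction for concatenation (\S\S\ref{stine}) is the GNS-type completion $\mc{H}\otimes_\Theta \mb{C}^{|X|}$ of precisely that form on a complete set $X$ for which \eqref{goal} \emph{already holds}, and in the main induction (which runs on $|X|$, not on length) the resulting operators $V_w$ are available only for the strictly smaller complete set $(X\setminus Y_1)^\peq$, which is what makes the induction close. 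Relatedly, even granting your space, the boundedness of the concatenation operators is a real issue---in the paper it is Proposition \ref{Lxbdd}, resting on the technical Lemma \ref{techlem}, which needs its own induction through standard form and the factorization Lemmas \ref{X1crossterms} and \ref{Y1crossterms}---and your sketch takes these operators and their adjoints for granted.

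Two further points. First, your expansion mechanism conflates two different decompositions: when inner letters at a common vertex $v$ collide, the scalar produced is $\varphi_v(c^*x')$, coming from the state decomposition $a = \mathring{a} + \varphi_v(a)1$ that defines $\Theta$ in \eqref{ucpdef}; it has nothing to do with the projection $V_vV_v^*$, whose compression yields the \emph{operator} $\theta_v(\cdot)$, not a scalar. The paper's combinatorial engine runs entirely on this $\varphi_v$-centering together with the standard form $x = ycab$ of Definition \ref{stdform}. Second, your claim that commuting dilation ranges would let one ``finish immediately'' via the universal property is not right even in the free case: a compression $W^*\Pi(\cdot)W$ of a representation is of course ucp, but it does not satisfy the defining multiplicative relation \eqref{ucpdef} on reduced words, and producing \emph{some} ucp map is trivial---producing this one is the content of the theorem, and Boca's proof of it is combinatorial, not a universal-property argument. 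A dilation-theoretic route of the kind you gesture at does exist for free products (\cite{davkak}), but it uses genuinely different machinery whose graph-product analogue your proposal does not supply.
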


The proof we present at the end of this section is an adaptation of Boca's original proof in \cite{boca}. It deserves mentioning that a recently posted preprint (\cite{davkak}) by Davidson-Kakariadis exhibits an alternative proof of the corresponding result in the amalgamated free product case using a dilation theoretic approach.  While a graph product companion to Davidson-Kakariadis's technique is worth pursuing, generalizing Boca's strategy to the graph product setting has the benefit of developing some tools and facts regarding the less familiar--and sometimes frustrating--combinatorics of graph products. Due to the subtlety of the combinatorics, some preparation is in order.  

For the sake of simpler notation we will denote $\bigstar_\Gamma \theta_v$ by $\Theta$. As in \cite{boca} assume that $\mc{B} \subset B(\mc{H})$ for some Hilbert space $\mc{H}$ and that $I_\mc{H} \in \mc{B}$.  It is well-known that it suffices to show that for any $n \in \mb{N}, x_1, \dots, x_n \in \bigstar_\Gamma \mc{A}_v, \xi_1, \dots, \xi_n \in \mc{H}$, \[\sum_{i,j =1}^n \langle \Theta(x_i^*x_j)\xi_j | \xi_i \rangle \geq 0.\]  By an argument identical to the one in \cite{boca}, we can further reduce the required inequality to the following.  It is enough to check that for any finite set $X$ in $\mc{W}_\text{red} \cup \left\{1\right\}$ and any function $\xi: X \rightarrow \mc{H}$, we have \[\sum_{x,y \in X} \langle \Theta(x^*y)\xi(y) | \xi(x)\rangle \geq 0.\]

Although the following fact is very simple, it deserves to be recorded separately because it is so fundamental to the proceeding arguments.

\begin{prop}
Let $x, y \in \mc{W}_\text{red}$.  If $x^*y$ is not reduced, there exist orderings of $\mbf{v}_x = (v_1,\dots, v_n)$ and $\mbf{v}_y= (v'_1,\dots, v'_m)$ such that $v_1 = v'_1$.
\end{prop}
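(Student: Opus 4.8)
The plan is to unwind what it means for the concatenation $x^*y$ to fail to be reduced and then to locate the defect precisely. Writing $x = a_1\cdots a_n$ and $y = b_1\cdots b_m$ as reduced words with $\mbf{v}_x = (v_1,\dots,v_n)$ and $\mbf{v}_y = (v'_1,\dots,v'_m)$, the adjoint $x^* = a_n^*\cdots a_1^*$ is again a reduced word, since $\varphi_{v_j}(a_j)=0$ forces $\varphi_{v_j}(a_j^*)=0$, and its vertex word is the reversal $(v_n,\dots,v_1)$. Thus the vertex word of $x^*y$ is $W := (v_n,\dots,v_1,v'_1,\dots,v'_m)$, of length $n+m$. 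First I would record, directly from Definition \ref{redv}, that reversal preserves reducedness, so that the length-$n$ prefix and the length-$m$ suffix of $W$ are each reduced.

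Next I would invoke the hypothesis that $W$ is not reduced to produce a witness: indices $k<l$ with equal vertices $W_k = W_l =: v$ such that $(W_k, W_p)\in E$ for every $p$ with $k<p<l$; that is, the letter $v$ at position $k$ commutes with every letter strictly between positions $k$ and $l$. The key structural claim is that such a witness must straddle the junction between the prefix and the suffix: if both $k,l\le n$ the witness would contradict reducedness of the reversed $\mbf{v}_x$, and if both $k,l\ge n+1$ it would contradict reducedness of $\mbf{v}_y$. Hence $k \le n < l$.

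With the witness straddling the junction, I would translate the commutation data back to $x$ and to $y$ separately. On the prefix side, positions $k+1,\dots,n$ of $W$ correspond to the letters of $x$ lying before the occurrence of $v$ coming from $W_k$; since all of these commute with $v$, a sequence of adjacent transpositions of commuting letters (the commutation moves generating the equivalence relation on vertex words) slides this $v$ to the front, producing a reordering of $\mbf{v}_x$ whose first entry is $v$. The symmetric argument on the suffix side slides the occurrence of $v$ coming from $W_l$ to the front of $y$, giving a reordering of $\mbf{v}_y$ whose first entry is $v$. Both first letters equal $v$, which is the assertion.

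I expect the only real obstacle to be the bookkeeping: keeping the reversal in $x^*$ straight when translating combined-word positions back to positions in $\mbf{v}_x$ and $\mbf{v}_y$, and confirming that the intermediate-commutation condition is exactly what is needed to move the common vertex to the front of each word. By Proposition \ref{reducedlemma} each reordering obtained this way is automatically another reduced word in the same equivalence class, so no reducedness is lost in the process.
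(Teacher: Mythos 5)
Your proof is correct. The paper itself gives no proof of this proposition---it is declared ``very simple'' and left implicit---and your argument (reversal preserves reducedness, so the witness pair $k<l$ for non-reducedness of $x^*y$ must straddle the junction, and the intermediate commutations then slide the common vertex to the end of $x^*$ and to the front of $y$) is exactly the natural formalization of the fact the paper takes for granted.
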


\subsection{Non-commutative length}\label{ncl}

We now discuss useful tools for the relevant combinatorics of this question.

\begin{dfn}\label{complete}
A finite subset $X \subset \mc{W}_\text{red} \cup \left\{ 1 \right\}$ is \emph{complete} if $1 \in X$ and whenever $a_1\cdots a_m \in X$ we have $a_{\sigma(2)}\cdots a_{\sigma(m)} \in X$ and  $a_{\sigma(1)} \cdots a_{\sigma(m-1)} \in X$ for every permutation $\sigma \in S_m$ such that $a_1 \cdots a_m = a_{\sigma(1)} \cdots a_{\sigma(m)}$.  In other words $X$ is complete if it contains the unit and is closed under left and right truncations of any equivalent rearrangements.  Compare this to Boca's definition of a complete set in \cite{boca}.  Let $\mbf{v}_X:= \left\{\mbf{v} \in \mc{W}_\text{red} | \mbf{v} = \mbf{v}_a \text{ for some } a \in X\right\}$.
\end{dfn}

Since every finite set in $\mc{W}_\text{red} \cup \left\{ 1\right\}$ is contained in a complete set, we can make one final reduction of the desired inequality as follows. For any complete set $X \subset \mc{W}_\text{red} \cup \left\{ 1 \right\}$ and any function $\xi: X \rightarrow \mc{H}$, we have 
\begin{align}
\sum_{x,y \in X} \langle \Theta(x^*y)\xi(y) | \xi(x)\rangle &\geq 0. \label{goal}
\end{align}

\begin{dfn}
We can place a partial order $\peq$ on $\mc{W}_\text{red} \cup \left\{ 1\right\}$ with respect to truncation as follows.  For every $x \in \mc{W}_\text{red}, 1 \peq x$; and given $x, y \in \mc{W}_\text{red}, y \peq x$ if either $x = y$ or $x$ truncates (as in Definition \ref{complete}) to $y$.  This order also applies to the words in $V$. 
\end{dfn}

Let $Y \subset \mc{W}_\text{red}\cup\left\{1\right\}$ be any finite nonempty subset.  Put \[Y^\peq := \left\{x \in \mc{W}_\text{red}\cup\left\{1\right\} | \exists y \in Y: x \peq y \right\}.\]  Clearly, $Y^\peq$ is complete.

\begin{dfn}\label{nclength}
Fix $v_0 \in V$.  Let $\mbf{v} = (v_1,\dots,v_n,v_0)$ be reduced.  We let $\nci \mbf{v} \nci_{v_0}$ denote the \emph{(right-hand) non-commutative length of $\mbf{v}$ with respect to $v_0$}, given by \[\nci \mbf{v} \nci_{v_0} := \text{Card}\Big(\left\{i | 1\leq i \leq n, (v_i,v_0) \notin E\right\}\Big).\]  Note that this counts when $v_0$ is repeated. If $\mbf{v}$ cannot be written with $v_0$ at the right-hand end, put $\nci \mbf{v} \nci_{v_0} = -1$.  If $w \in \bigstar_\Gamma \mc{A}_v$ is reduced, let $\nci w \nci_{v_0} = \nci \mbf{v}_w\nci_{v_0}$.  Given a finite set $X$ of reduced words (of vertices or algebra elements), we define the \emph{(right-hand) non-commutative length of $X$ with respect to $v_0$}, denoted $\nci X \nci_{v_0}$ to be given by \[ \nci X \nci_{v_0} := \max_{w \in X} \nci w \nci_{v_0}.\]
\end{dfn}

\begin{rmk}\label{lengths}
Observe that in a free product (graph product over a graph with no edges), the length of a reduced word is always one more than the non-commutative length of a reduced word.
\end{rmk}

\begin{dfn}\label{stdform}
Fix $v_0 \in V$. Let $\mbf{x} \in \mc{W}_\text{red}$ be such that $v_0 \in \mbf{x}$. Suppose $\mbf{y},\mbf{c},\mbf{b}  \in \mc{W}_\text{red}$, satisfy the following properties. 
\begin{itemize}
	\item $\mbf{x} = \mbf{y}\mbf{c}(v_0)\mbf{b}$;
	\item $\mbf{b}$ is the word of smallest length so that $\mbf{y}\mbf{c}(v_0) \peq \mbf{x}$ and $\nci \mbf{y}\mbf{c}(v_0)\nci_{v_0} = \nci \left\{\mbf{x}\right\}^\peq \nci_{v_0}$;
	\item $\mbf{y}$ is the word of smallest length so that $\mbf{y}(v_0) \peq \mbf{x}$ and $\nci \mbf{y}(v_0) \nci_{v_0} = \nci \left\{\mbf{x}\right\}^\peq \nci_{v_0}$.
\end{itemize}
Then we say that $\mbf{x} = \mbf{y}\mbf{c}(v_0)\mbf{b}$ is in \emph{standard form with respect to $v_0$}.
We extend this definition to reduced words of algebra elements.
\end{dfn}

The following proposition follows from a straightforward induction argument using the fact that truncation preserves standard form; the proof is left as an exercise.

\begin{prop}
If $\mbf{x} = \mbf{y}\mbf{c}(v_0)\mbf{b}$ is in standard form with respect to $v_0$, then the words $\mbf{y}, \mbf{c},$ and $\mbf{b}$ are unique.
\end{prop}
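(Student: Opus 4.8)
The plan is to induct on the length $|\mbf{x}|$, using the fact that each of the three conditions defining the standard form is inherited by the appropriate truncation, so that removing a single maximal letter reduces to a strictly shorter word already in standard form. Throughout, set $L := \nci \{\mbf{x}\}^\peq \nci_{v_0}$. First I would record two structural consequences of the defining conditions. The equality $\nci \mbf{y}\mbf{c}(v_0)\nci_{v_0} = \nci \mbf{y}(v_0)\nci_{v_0} = L$ forces every letter of $\mbf{c}$ to commute with $v_0$: the left-hand count exceeds the right-hand one by exactly the number of letters of $\mbf{c}$ not commuting with $v_0$, so that number is $0$, and in particular $\mbf{y}\mbf{c}(v_0)\sim\mbf{y}(v_0)\mbf{c}$. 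Second, because $v_0$ does not commute with itself, the occurrences of $v_0$ appear in a fixed relative order in every reduced rearrangement of $\mbf{x}$ (Proposition \ref{reducedlemma}(2)); I claim the terminal $v_0$ of the standard form must be the last of these, the \emph{topmost} $v_0$. Indeed, the value $\nci \mbf{w}(v_0)\nci_{v_0}$ of any truncation $\mbf{w}(v_0)\peq\mbf{x}$ equals the number of non-commuting letters preceding its terminal $v_0$, and this is maximized (equal to $L$) precisely when the terminal $v_0$ is the topmost one, since any earlier choice would push the remaining $v_0$'s---themselves non-commuting letters---past the end and strictly decrease the count.

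Next I would prove uniqueness of $\mbf{b}$. Given two standard forms $\mbf{x}=\mbf{y}\mbf{c}(v_0)\mbf{b}=\mbf{y}'\mbf{c}'(v_0)\mbf{b}'$, minimality gives $|\mbf{b}|=|\mbf{b}'|$, and by the previous paragraph both terminal $v_0$'s are the topmost one. The condition that $\mbf{b}$ be of smallest length with $\mbf{y}\mbf{c}(v_0)\peq\mbf{x}$ then identifies $\mbf{b}$ with the set of letters of $\mbf{x}$ that must appear to the right of the topmost $v_0$ in every reduced rearrangement; this description depends only on $\mbf{x}$ and $v_0$, so $\mbf{b}=\mbf{b}'$ and hence $\mbf{y}\mbf{c}(v_0)=\mbf{y}'\mbf{c}'(v_0)$ as reduced words. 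To package this as the inductive step, I would peel off a rightmost letter $u$ of $\mbf{b}$; since no $v_0$ lies to the right of the topmost one, $u\neq v_0$, and one checks directly that $\mbf{y}\mbf{c}(v_0)\mbf{b}_0$ (with $\mbf{b}=\mbf{b}_0(u)$) is again in standard form with the same $\mbf{y},\mbf{c}$ and the same $L$. The inductive hypothesis applied to this shorter word then yields uniqueness, which transfers back to $\mbf{x}$.

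It remains to handle the truncation $\mbf{y}\mbf{c}(v_0)$ itself, i.e.\ the case $\mbf{b}=e$, and to deduce uniqueness of $\mbf{y}$ and $\mbf{c}$. Here the third condition forces $\mbf{y}(v_0)$ to be the \emph{smallest} truncation of $\mbf{x}$ ending in the topmost $v_0$ with non-commutative length $L$. Any such truncation must contain every letter forced to the left of the topmost $v_0$, while minimality of $|\mbf{y}|$ discards every letter that may instead be slid to its right---exactly the letters that commute with $v_0$ and are not themselves forced to the left. Thus $\mbf{y}(v_0)$, and hence $\mbf{y}$, is determined by $\mbf{x}$ and $v_0$ alone; rewriting $\mbf{y}\mbf{c}(v_0)\sim\mbf{y}(v_0)\mbf{c}$ and peeling a rightmost letter (which lies in $\mbf{c}$, hence commutes with $v_0$) again produces a shorter standard form with the same $\mbf{y}$, so the inductive hypothesis applies. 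Finally $\mbf{c}=\mbf{c}'$ follows because $\mbf{c}$ is the complement of $\mbf{y}(v_0)$ inside the now-common truncation $\mbf{y}\mbf{c}(v_0)=\mbf{y}'\mbf{c}'(v_0)$.

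The hard part will be the bookkeeping behind the phrases \tql must appear to the right\tqr and \tql forced to the left\tqr: because reduced words are determined only up to the commutation moves generating $\sim$, I must show that the partition of the letters of $\mbf{x}$ into those strictly preceding the topmost $v_0$, those strictly following it, and those commuting freely past it is invariant under these moves. Concretely this amounts to reconstructing, from the two relations defining $\sim$, the induced partial order on the letter-occurrences of $\mbf{x}$ (the heap, or trace-monoid, order), and checking that \tql above\tqr, \tql below\tqr, and \tql incomparable to\tqr the topmost $v_0$ are well-defined; Proposition \ref{reducedlemma}(2) is the main tool. Once this invariance is in hand, the assertion that truncation preserves standard form is immediate and the induction on $|\mbf{x}|$ is routine, with base case $\mbf{x}=(v_0)$.
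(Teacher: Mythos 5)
Your proposal is correct and takes essentially the approach the paper itself prescribes: the paper's ``proof'' is a one-line remark that the proposition follows by a straightforward induction using the fact that truncation preserves standard form (left as an exercise), and your argument is exactly that induction, fleshed out with the heap-order bookkeeping (the topmost $v_0$, and the partition of the letters of $\mbf{x}$ into those forced below it, forced above it, and incomparable to it) needed to make the exercise rigorous. Indeed, your letter-partition characterization of $\mbf{y}$, $\mbf{c}$, $\mbf{b}$ already pins each factor down up to equivalence of reduced words, so the induction is mainly packaging---consistent with, and more informative than, the paper's sketch.
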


\noin Given $a \in \mc{A}_v$, let $\mathring{a}:= a - \varphi_v(a)1$.  We have the following lemma.

\begin{lem}\label{X1crossterms}
Fix $v_0 \in V$. Let $\mbf{x} \in \mc{W}_\text{red}$ be such that $v_0 \in \mbf{x}$.  Say $\mbf{x} = \mbf{y}\mbf{c}(v_0)\mbf{b}$ is in standard form with respect to $v_0$.  Let $y, c, a, b \in \mc{W}_\text{red} \cup \left\{1\right\}$ be such that $\mbf{v}_y = \mbf{y}, \mbf{v}_c = \mbf{c}, \mbf{v}_b = \mbf{b},$ and $a \in \mathring{\mc{A}}_{v_0}$.  If $\mbf{x}'$ is such that $\nci \left\{\mbf{x}'\right\}^\peq \nci_{v_0} < \nci \left\{\mbf{x}\right\}^\peq \nci_{v_0}$, then for every $x' \in \mc{W}_\text{red}$ such that $\mbf{x}' = \mbf{v}_{x'}$, \[\Theta(b^*a^*c^*y^*x') = \Theta(b^*a^*)\Theta(c^*y^*x').\]

\end{lem}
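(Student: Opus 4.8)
The plan is to convert the stated identity into a purely combinatorial claim about word reduction. First I would record the elementary principle, immediate from \eqref{ucpdef}, that if $u,w\in\mc{W}_\text{red}$ and the concatenation $\mbf{v}_u\mbf{v}_w$ is again reduced then $\Theta(uw)=\Theta(u)\Theta(w)$, and that this survives linearity: if $Q$ reduces to $\sum_k\lambda_k Q_k$ with each $\mbf{v}_P\mbf{v}_{Q_k}$ reduced, then $\Theta(PQ)=\Theta(P)\Theta(Q)$. Writing $P:=b^*a^*$ and $Q:=c^*y^*x'$, the whole lemma becomes the assertion that the product $PQ$ admits a reduction during which no cancellation ever pairs a letter coming from $P$ with a letter coming from $Q$; I will call this the \emph{absence of cross cancellation}.

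Next I would pin down which letters of $P$ can migrate to the $P\,|\,Q$ junction. Since $\mbf{x}=\mbf{y}\mbf{c}(v_0)\mbf{b}$ is reduced, so is $(v_0)\mbf{b}$, hence $P$ is reduced with rightmost letter $a^*\in\mathring{\mc{A}}_{v_0}$ sitting at $v_0$. The key point is that standard form forbids any letter of $\mbf{b}$ from reaching $v_0$: a letter of $\mbf{b}$ not commuting with $v_0$ is blocked by $v_0$ itself, while a letter of $\mbf{b}$ that commutes with $v_0$ and with everything between it and $v_0$ could be slid leftward past $v_0$, enlarging $\mbf{c}$ and shortening $\mbf{b}$ without changing $\nci \mbf{y}\mbf{c}(v_0)\nci_{v_0}$, contradicting the minimality of $\mbf{b}$ in Definition \ref{stdform}. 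Thus $a^*$ is the only letter of $P$ that can reach the junction.

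It then remains to see that $a^*$ cannot be cancelled against any $v_0$ occurring in $Q$. Because $\mbf{c}$ is the commuting collar (all its letters commute with $v_0$, forced by $\nci \mbf{y}\mbf{c}(v_0)\nci_{v_0}=\nci \mbf{y}(v_0)\nci_{v_0}$), the letter $a^*$ slides freely through $c^*$ and meets $y^*x'$. In the unreduced word $Q$ this is already settled: for a $v_0$ internal to $\mbf{y}$, reducedness of $\mbf{y}(v_0)$ supplies a letter strictly to its right in $\mbf{y}$ not commuting with $v_0$, shielding it from $a^*$; and when $N:=\nci\left\{\mbf{x}\right\}^\peq\nci_{v_0}\geq 1$ these $N\geq 1$ non-commuting letters of $\mbf{y}$ equally shield all of $x'$. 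When $N=0$, the minimality clause of standard form forces $\mbf{y}=e$, and the hypothesis $\nci\left\{\mbf{x}'\right\}^\peq\nci_{v_0}<0$ means no truncation of $\mbf{x}'$ ends in $v_0$, i.e. $v_0\notin\mbf{x}'$, so there is nothing for $a^*$ to meet.

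The hard part, and the place where the hypothesis $\nci\left\{\mbf{x}'\right\}^\peq\nci_{v_0}<N$ is genuinely used, is that the internal reduction of $Q$ at the $y^*\,|\,x'$ junction could a priori cancel the shielding non-commuting letters of $\mbf{y}$ against matching letters of $x'$ and thereby expose a $v_0$ to $a^*$. To expose such a $v_0$ one would have to annihilate all of the non-commuting-with-$v_0$ letters of $\mbf{y}$ lying to its left, and each such cancellation consumes a matching non-commuting-with-$v_0$ letter of $x'$; this would force some truncation of $\mbf{x}'$ to have non-commutative length at least $N$, contradicting the hypothesis. I would make this precise by inducting on $|x'|$: peel off the first junction cancellation, observe that it strictly decreases $|x'|$ while preserving the strict inequality of non-commutative lengths relative to the correspondingly truncated standard form, and invoke the inductive hypothesis. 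Controlling this bookkeeping---verifying that standard form and the non-commutative length bound survive each peeling step---is the main obstacle; everything else is a direct consequence of \eqref{ucpdef} together with the reducedness properties recorded in Proposition \ref{reducedlemma}.
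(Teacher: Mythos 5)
Your proposal is correct, and its engine is the same one driving the paper's proof: peel off the first cancellation occurring at the junction between $c^*y^*$ and $x'$, split the merged pair as $z_1^*x_1' = \mathring{z_1^*x_1'} + \varphi_{\mbf{v}_{z_1}}(z_1^*x_1')1$ into a centered term and a scalar term, verify that each of the two resulting configurations is again in standard form with $|x'|$ shortened by one and the strict non-commutative-length inequality intact, and induct on $|x'|$. Your supporting observations are also the paper's: minimality of $\mbf{b}$ is exactly what keeps $b$-letters from crossing $a^*$; the equality $\nci \mbf{y}\mbf{c}(v_0)\nci_{v_0} = \nci \mbf{y}(v_0)\nci_{v_0}$ built into standard form forces every letter of $\mbf{c}$ to commute with $v_0$; and $\nci \left\{\mbf{x}\right\}^\peq \nci_{v_0} = 0$ forces $\mbf{y}=e$ and $v_0 \notin \mbf{x}'$. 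The differences are organizational rather than substantive. The paper proves the operator identity directly by a double induction (outer on $\nci \left\{\mbf{x}\right\}^\peq \nci_{v_0}$, inner on $|x'|$), applying the inductive hypothesis to both terms of the decomposition inside $\Theta$; you instead isolate a purely combinatorial claim (no reduced word in the expansion of $c^*y^*x'$ admits a cancellation against $b^*a^*$) and recover the identity once at the end by linearity of $\Theta$ on the span of reduced words. Your single induction on $|x'|$, carrying the strict inequality as an invariant, legitimately absorbs the paper's outer induction: in the scalar branch with a non-commuting cancelling letter, both sides of the inequality drop by one (or the smaller side stays at $-1$), which is the paper's ``quick check.'' One caution about the heuristic count in your final paragraph: the static claim that each annihilated shield of $\mbf{y}$ consumes a distinct matching non-commuting letter of $x'$ is not airtight on its own, because a priori a single letter of $x'$ could merge successively with two shields of $\mbf{y}$ lying at the same vertex; what forbids this is that reducedness of $\mbf{y}$ interposes between those two shields a letter not commuting with their vertex, which would itself have to be annihilated first, and disentangling such cascades is precisely what the induction does. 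So, as that same paragraph correctly anticipates, it is the induction with its bookkeeping---not the counting---that must carry the proof.
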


\begin{proof}
We proceed by induction on $\nci \left\{\mbf{x}\right\}^\peq \nci_{v_0}$. 

\begin{itemize}
	\item $\nci \left\{\mbf{x}\right\}^\peq \nci_{v_0} = 0$: We proceed by further induction on $|\mbf{x}'|$.  
		\begin{itemize}
			\item[\bb] $|\mbf{x}'| = 0$:  $x' = 1$, and the statement is obviously true.  
			
			\item [\bb] $|\mbf{x}'| = k >0$: if $b^*a^*c^*y^*x'$ is reduced then the equality holds.  Suppose $b^*a^*c^*y^*x'$ is not reduced.  In this case $y =1$. Let $c = c_1\cdots c_m$ and $x' = x'_1\cdots x'_k$.  By the definition of standard form, we have that we can rearrange the $c_i$'s and $x'_i$'s so that $\mbf{v}_{c_1} = \mbf{v}_{x'_1}$.  That is, none of the $b$ terms can cross past $a$; otherwise the minimality of $|b|$ would be contradicted.  So we have
\begin{align}
&\Theta(b^* a^* c_m^* \cdots c_1^* x'_1 \cdots x'_k)\notag\\
 & = \Theta(b^*a^*c_m^*\cdots c_2^*(\mathring{c_1^*x'_1})x'_2\cdots x'_k)\notag\\
 & + \varphi_{\mbf{v}_{x'_1}}(c_1^*x'_1)\Theta(b^*a^*c_m^*\cdots c_2^*x'_2\cdots x'_k) \notag \\
 &=  \Theta(b^*a^*)\Theta(c_m^*\cdots c_2^*(\mathring{c_1^*x'_1})x'_2\cdots x'_k)  \label{1}\\
 &+ \varphi_{\mbf{v}_{x'_1}}(c_1^*x'_1)\Theta(b^*a^*)\Theta(c_m^*\cdots c_2^*x'_2\cdots x'_k)\notag \\
 &= \Theta(b^*a^*)\Theta(c_m^* \cdots c_1^* x'_1 \cdots x'_k) \notag 
\end{align}
where \eqref{1} follows from the fact that $\nci \left\{x_2' \cdots x_k\right\}^\peq\nci_{v_0}$ is less than $\nci \left\{(\mathring{c_1^*x_1'})^* c_2 \cdots c_m a b \right\}^\peq\nci_{v_0}$ and $\nci \left\{c_2\cdots c_m a b\right\}^\peq\nci_{v_0}$, and thus the inductive hypothesis gives the desired equality.
		\end{itemize}
		
	\item $\nci \left\{\mbf{x}\right\}^\peq \nci_{v_0} >0$:  Again we induct further on $|\mbf{x}'|$.  
		\begin{itemize}
			\item[\bb] $|\mbf{x}'| =0$: Trivial.  
			
			\item[\bb] $|\mbf{x}'| = k>0$:  If $b^*a^*c^*y^*x'$ is reduced then the equality holds.  Suppose $b^*a^*c^*y^*$ is not reduced, and let $cy = z_1 \cdots z_m$ and $x' = x'_1 \cdots x'_k$.  As before, we can rearrange the $z_i$'s and $x'_i$'s so that $\mbf{v}_{z_1} = \mbf{v}_{x'_1}$.  If $(\mbf{v}_{z_1}, v_0) \in E$  then the argument in the $\nci \left\{\mbf{x}\right\}^\peq \nci_{v_0} = 0$ case holds.  Assume that $(\mbf{v}_{z_1}, v_0) \notin E$.  Then $\nci z_2 \cdots z_m a\nci_{v_0} = \nci yca\nci_{v_0} - 1 \geq 0$.  It is a quick check to see that if $\nci \left\{\mbf{x}'\right\}^\peq\nci_{v_0} \neq -1$ then deleting $x'_1$ from the left decreases the non-commutative length by one, and if $\nci \left\{\mbf{x}'\right\}^\peq\nci_{v_0} = -1$, then deleting $x'_1$ leaves the non-commutative length alone.  In either case, the inductive hypothesis applies, yielding the equality as illustrated above.\qedhere
		\end{itemize}
	\end{itemize}
	
\end{proof}

\begin{lem}\label{Y1crossterms}
Fix $v_0 \in V$. Let $\mbf{x}, \mbf{x}' \in \mc{W}_\text{red}$ be such that $\nci \left\{ \mbf{x}\right\}^\peq \nci_{v_0} = \nci \left\{ \mbf{x}'\right\}^\peq \nci_{v_0} >0$.  Let $\mbf{y}, \mbf{y}', \mbf{c}, \mbf{c}', \mbf{b}, \mbf{b}' \in \mc{W}_\text{red}$ be such that $\mbf{x} = \mbf{y} \mbf{c} (v_0) \mbf{b}$ and $\mbf{x}' = \mbf{y}' \mbf{c}' (v_0) \mbf{b}'$ are both in standard form with respect to $v_0$.  If $\mbf{y} \neq \mbf{y}'$ then for every $y, y', c, c', a, a', b, b' \in \mc{W}_\text{red}\cup \left\{1\right\}$ such that $\mbf{v}_y = \mbf{y}, \mbf{v}_{y'} = \mbf{y}', \mbf{v}_c = \mbf{c}, \mbf{v}_{c'} = \mbf{c}', \mbf{v}_b = \mbf{b}, \mbf{v}_{b'} = \mbf{b}',$ and $a, a' \in \mathring{\mc{A}}_{v_0}$ we have \[\Theta(b^*a^*c^*y^*y'c'a'b') = \Theta(b^*a^*)\Theta(c^*y^*y'c'a'b') (= \Theta(b^*a^*)\Theta(c^*y^*y'c')\Theta(a'b') ) .\]
\end{lem}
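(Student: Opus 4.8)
The plan is to reduce the entire statement to its leftmost factorization
\[\Theta(b^*a^*c^*y^*y'c'a'b') = \Theta(b^*a^*)\,\Theta(c^*y^*y'c'a'b'),\]
which I will call $(\mathrm L)$. Since each $\theta_v$ is unital completely positive it is $*$-preserving, and hence so is $\Theta$ on the span of $\mc{W}_\text{red}\cup\{1\}$. The hypotheses of the lemma are symmetric under interchanging $\mbf x$ and $\mbf x'$: both share the common non-commutative length $N := \nci\{\mbf x\}^\peq\nci_{v_0} = \nci\{\mbf x'\}^\peq\nci_{v_0} > 0$, and the condition $\mbf y \neq \mbf y'$ is itself symmetric. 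Thus, once $(\mathrm L)$ is established for all data meeting these hypotheses, I may apply it with the primed and unprimed roles interchanged and take adjoints; this yields $\Theta(c^*y^*y'c'a'b') = \Theta(c^*y^*y'c')\,\Theta(a'b')$. Multiplying the latter on the left by $\Theta(b^*a^*)$ and combining with $(\mathrm L)$ produces the parenthetical triple factorization. So everything reduces to proving $(\mathrm L)$.

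To prove $(\mathrm L)$, the guiding principle is that the distinguished $v_0$-letter $a^*$ at the right end of the block $b^*a^*$ can never be brought into contact with the $v_0$-letter $a'$ (nor with any $v_0$-letter of $\mbf b'$) during the reduction of $b^*a^*c^*y^*y'c'a'b'$. Because $a,a' \in \mathring{\mc A}_{v_0}$ and $a^*$ can be absorbed only by another element of $\mathring{\mc A}_{v_0}$, it suffices to show that $a^*$ stays permanently separated from every such letter by at least one letter whose vertex is not $E$-adjacent to $v_0$. The letters of $\mbf c, \mbf c'$ are $E$-adjacent to $v_0$ and hence transparent, so any such separating letter must come from the cores $\mbf y, \mbf y'$. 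The key claim is that, because $\mbf y \neq \mbf y'$, no sequence of admissible reductions of $c^*y^*y'c'$ can cancel all of the $v_0$-non-commuting letters of $\mbf y^*\mbf y'$. This rests on the minimality and uniqueness built into Definition \ref{stdform}: each of $\mbf y,\mbf y'$ carries exactly $N \ge 1$ non-commuting letters and is the shortest truncation with this property, so a complete cancellation would force $\mbf y = \mbf y'$, contrary to hypothesis.

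I would turn this into a proof by running the same double induction as in Lemma \ref{X1crossterms} --- an outer induction on $N$ and an inner induction on $|\mbf x'|$ --- pushing the letters of $\tilde x := y'c'a'b'$ across the junction into $c^*y^*$ one at a time. If $b^*a^*c^*y^*\tilde x$ is already reduced, then $(\mathrm L)$ is immediate from the definition of $\Theta$ on reduced words. Otherwise, exactly as in Lemma \ref{X1crossterms}, one rearranges so that the colliding pair $z_1, x_1'$ share a vertex and expands $z_1^*x_1' = \mathring{(z_1^*x_1')} + \varphi(z_1^*x_1')\,1$; the scalar term shortens $\tilde x$, while the recentered term replaces two letters by one. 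The new ingredient is that, since $\mbf y \neq \mbf y'$, the consumption of $\mbf y^*$ against $\mbf y'$ cannot proceed through a complete match: at the first mismatch the junction exposes a non-commuting letter with no cancelling partner, and one checks that the word then appended to $c^*y^*$ has non-commutative length strictly less than $N$. At that point Lemma \ref{X1crossterms} applies verbatim and delivers $(\mathrm L)$, while the intermediate terms are absorbed by the inductive hypotheses just as in the proof of Lemma \ref{X1crossterms}.

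The main obstacle is the combinatorial claim isolated above: that $\mbf y \neq \mbf y'$ genuinely prevents all of the non-commuting letters separating $a^*$ from $a'$ from being cancelled, across every branch of the reduction. Making this precise requires careful use of the uniqueness of the standard form together with a clean accounting --- via the non-commutative length --- of how each reduction step (the scalar term versus the recentered term) affects the separation between the two distinguished $v_0$-letters, and in particular of the stage at which the appended word first drops to non-commutative length below $N$. Once this separation statement is secured, the reduction to Lemma \ref{X1crossterms} and the $*$-symmetry of the first paragraph complete the argument with only routine bookkeeping.
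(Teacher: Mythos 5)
Your proposal assembles the right ingredients --- the recentering decomposition $z_1^*z_1' = \mathring{(z_1^*z_1')} + \varphi(z_1^*z_1')1$, a double induction, Lemma \ref{X1crossterms} as the workhorse, and the hypothesis $\mbf{y}\neq\mbf{y}'$ as the crucial input --- but the heart of the argument is missing, and you say so yourself: the ``key claim'' that $\mbf{y}\neq\mbf{y}'$ prevents the $v_0$-non-commuting letters separating $a^*$ from $a'$ from ever being exhausted is essentially the content of the lemma, and your final paragraph defers it as ``the main obstacle'' rather than proving it. The route you sketch toward it is also problematic on its own terms. First, in a graph product letters commute past one another, so ``the first mismatch'' in the consumption of $\mbf{y}^*$ against $\mbf{y}'$ is not a well-defined notion. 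Second, when a collision at a vertex \emph{not} adjacent to $v_0$ does occur, the recentered letter $\mathring{(z_1^*z_1')}$ attaches to the right-hand word: that word still has non-commutative length $N$, while the left-hand word has dropped to $N-1$, so Lemma \ref{X1crossterms} does \emph{not} apply ``verbatim'' to split off $\Theta(b^*a^*)$; it applies only in the reversed orientation (with the primed word playing the standard-form role, pulling $\Theta(a'b')$ off the right). Third, your inner induction variable $|\mbf{x}'|$ does not decrease in the recentered branch when the colliding vertex \emph{is} adjacent to $v_0$ (two letters are replaced by one sitting in the middle), so that branch stalls: neither Lemma \ref{X1crossterms} nor your inner inductive hypothesis is available there.

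The paper closes the induction without any ``separation'' invariant. It inducts outer on $N=\nci\{\mbf{x}\}^\peq\nci_{v_0}$ and inner on $m+m'$, the combined length of $yc$ and $y'c'$ (which strictly decreases in \emph{every} branch). If the colliding vertex commutes with $v_0$, both terms of the decomposition fall to the inner inductive hypothesis. If it does not, the scalar term deletes one non-commuting letter from \emph{each} side, so both new words have non-commutative length $N-1$ and their $\mbf{y}$-parts remain distinct (the deleted letters share a vertex, so equality of the remainders would force $\mbf{y}=\mbf{y}'$); hence the outer inductive hypothesis of Lemma \ref{Y1crossterms} itself --- not Lemma \ref{X1crossterms} --- handles that term, while the recentered term goes to Lemma \ref{X1crossterms} in the reversed orientation noted above. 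The hypothesis $\mbf{y}\neq\mbf{y}'$ is cashed in at the bottom: when $N=1$, standard form forces $\mbf{y}$ and $\mbf{y}'$ to be single non-commuting letters, so a collision at a vertex non-adjacent to $v_0$ would give $\mbf{y}=\mbf{y}'$, a contradiction; only commuting collisions survive, and the inner induction finishes. Your separation statement is thus a \emph{consequence} of this induction, not something one can establish beforehand and then invoke. Separately, your symmetry reduction is off: interchanging primes in $(\mathrm{L})$ and taking adjoints yields $\Theta(b^*a^*c^*y^*y'c'a'b') = \Theta(b^*a^*c^*y^*y'c')\Theta(a'b')$, not $\Theta(c^*y^*y'c'a'b') = \Theta(c^*y^*y'c')\Theta(a'b')$; the latter identity is true, but it follows from Lemma \ref{X1crossterms} applied to $b'^*a'^*c'^*y'^*\cdot yc$ (legitimate since $\nci\{\mbf{y}\mbf{c}\}^\peq\nci_{v_0} < N$), so even this step leans on the crossterm lemma rather than on symmetry alone.
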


\begin{proof}
Let $yc = z_1\cdots z_m$ and $y'c' = z'_1\cdots z'_{m'}$.  We proceed by induction on $\nci \left\{\mbf{x}\right\}^\peq\nci_{v_0}$.
\begin{itemize}
	\item $\nci \left\{\mbf{x}\right\}^\peq \nci_{v_0} = 1$: We induct further on $m + m'$.
	
	\begin{itemize}
		\item[$\bullet \bullet$] $m+m' = 2$: Since $\mbf{y} \neq \mbf{y}'$, we immediately get that $b^*a^*z_1^*z_1'a'b'$ is reduced.  So the equality follows.
		
		\item[$\bullet \bullet$] $m + m' > 2$: If $b^*a^*c^*y^*y'c'a'b'$ is reduced then we are done.  Suppose $b^*a^*c^*y^*y'c'a'b'$ is not reduced.  Then we can rearrange the $z$ and $z'$ terms so that $\mbf{v}_{z_1} = \mbf{v}_{z'_1}$.  Then we have
		\begin{align}
		&\Theta(b^* a^* z_m^* \cdots z_1^* z'_1 \cdots z'_{m'} a' b')\notag\\
		 & = \Theta(b^*a^*z_m^*\cdots z_2^*(\mathring{z_1^*z'_1})z'_2\cdots z'_{m'} a' b') \label{2} \\
 &+ \varphi_{\mbf{v}_{z_1}}(z_1^*z'_1)\Theta(b^*a^*z_m^*\cdots z_2^*z'_2\cdots z'_{m'} a' b') \notag
		\end{align}
		Since $\mbf{y} \neq \mbf{y}'$ we have that $(\mbf{v}_{z_1}, v_0) \in E$.   The inductive hypothesis on $m + m'$ applies, yielding the desired equality.
	\end{itemize}
	
	\item $\nci \left\{\mbf{x}\right\}^\peq \nci_{v_0} >1$: Again, induct further on $m + m'$.
	
		\begin{itemize}
		
			\item[$\bullet \bullet$] $m + m' = 2\nci \left\{\mbf{x}\right\}^\peq \nci_{v_0}$: Suppose $b^*a^*c^*y^*y'c'a'b'$ is not reduced and that $\mbf{v}_{z_1} = \mbf{v}_{z'_1}$.  Then we obtain the same decomposition as in \eqref{2}.  Then by applying Lemma \ref{X1crossterms} to the first term on the right-hand side of \eqref{2} and the inductive hypothesis on $\nci \left\{\mbf{x}\right\}^\peq\nci_{v_0}$ to the second term, we obtain the desired equality.
			
			\item[$\bullet \bullet$] $m + m' > 2\nci\left\{\mbf{x}\right\}^\peq\nci_{v_0}$: Suppose $b^*a^*c^*y^*y'c'a'b'$ is not reduced and that $\mbf{v}_{z_1} = \mbf{v}_{z'_1}$; consider the decomposition from \eqref{2}.  If $(\mbf{v}_{z_1}, v_0) \notin E$, then as in the $m + m' = 2\nci \left\{\mbf{x}\right\}^\peq \nci_{v_0}$ case, apply Lemma \ref{X1crossterms} to the first term on the right-hand side of \eqref{2} and apply the inductive hypothesis on $\nci \left\{\mbf{x}\right\}^\peq\nci_{v_0}$ to the second term.  If $(\mbf{v}_{z_1}, v_0) \in E$, apply the inductive hypothesis on $m + m'$ to both terms on the right-hand side of \eqref{2}.\qedhere		
		\end{itemize}	
\end{itemize}
\end{proof}

\subsection{A Stinespring construction for concatenation}\label{stine}

The goal of this subsection is to show the following generalization of Schwarz's Inequality.

\begin{prop}\label{schwarz}
Let $X \subset \mc{W}_\text{red}\cup \left\{1\right\}$ be a complete set, and assume that for every function $\xi: X \rightarrow \mc{H}$, \eqref{goal} holds.  For $1 \leq i \leq N$, let $b_i, c_i, b_ic_i \in X$.  Then we have the following matrix inequality. \[ \left[ \Theta(b_i^*c_i^*c_jb_j)\right]_{ij} \geq \left[\Theta(b_i^*)\Theta(c_i^*c_j)\Theta(b_j)\right]_{ij}\]\end{prop}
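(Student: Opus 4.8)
My plan is to run the GNS--Stinespring construction attached to the positive form \eqref{goal} and then read off both matrices in the statement as Gram matrices of explicit vectors in the resulting Hilbert space. On the space of finitely supported functions $\xi\colon X\to\mc{H}$, writing $x\otimes\xi$ for the function equal to $\xi$ at $x$ and $0$ elsewhere, I would consider the sesquilinear form $\langle \xi | \eta\rangle_0:=\sum_{x,y\in X}\langle\Theta(y^*x)\xi(x)|\eta(y)\rangle$; the hypothesis \eqref{goal} is exactly the assertion that this form is positive semidefinite. Quotienting by its null space and completing yields a Hilbert space $\mc{K}$ with a canonical map $\Lambda$ satisfying $\langle\Lambda(x\otimes\xi)|\Lambda(y\otimes\eta)\rangle=\langle\Theta(y^*x)\xi|\eta\rangle$. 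Because $1\in X$ and $\Theta(1)=I$, the assignment $V\xi:=\Lambda(1\otimes\xi)$ is an isometry $\mc{H}\to\mc{K}$, and one checks at once the reproducing identity $V^*\Lambda(x\otimes\xi)=\Theta(x)\xi$ for every $x\in X$. This is the \tql Stinespring construction for concatenation\tqr\ promised by the subsection title.

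Next I would introduce, for each index $i$, the vectors $T_i\xi:=\Lambda(c_ib_i\otimes\xi)$ and $P_i\xi:=\Lambda(c_i\otimes\Theta(b_i)\xi)$, the relevant reduced words lying in $X$ by the hypotheses and completeness. Using that $\Theta$ is $*$-preserving together with the reproducing identity, a direct computation gives $T_i^*T_j=\Theta(b_i^*c_i^*c_jb_j)$ and $P_i^*P_j=\Theta(b_i^*)\Theta(c_i^*c_j)\Theta(b_j)$, so that the left- and right-hand matrices in the statement are precisely $[\,T_i^*T_j\,]$ and $[\,P_i^*P_j\,]$. Setting $F_i\xi:=T_i\xi-P_i\xi\in\mc{K}$, the matrix $[\,F_i^*F_j\,]$ is a genuine Gram matrix and hence positive; and expanding $[\,F_i^*F_j\,]=[\,T_i^*T_j\,]-[\,T_i^*P_j\,]-[\,P_i^*T_j\,]+[\,P_i^*P_j\,]$ shows that the desired inequality $[\,T_i^*T_j\,]\ge[\,P_i^*P_j\,]$ is equivalent to the collapse of the cross terms, namely $T_i^*P_j=P_i^*T_j=P_i^*P_j$ for all $i,j$. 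Unwinding these through the reproducing identity, they reduce to the single factorization identity
\[\Theta(c_i^*c_jb_j)=\Theta(c_i^*c_j)\Theta(b_j)\]
together with its adjoint version $\Theta(b_i^*c_i^*c_j)=\Theta(b_i^*)\Theta(c_i^*c_j)$.

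The entire burden now rests on this factorization, and I expect it to be the main obstacle; it is also exactly what the non-commutative length apparatus of \S\S\ref{ncl} was built to supply. The identity says that the reduced block $b_j$ detaches from the product as a separate $\Theta$-factor, which is precisely the phenomenon captured by Lemma \ref{X1crossterms} and Lemma \ref{Y1crossterms}, whose conclusions have the shape $\Theta(b^*a^*c^*y^*x')=\Theta(b^*a^*)\Theta(c^*y^*x')$. Concretely I would fix a vertex $v_0$ appearing in $b_j$, place $c_jb_j$ in standard form with respect to $v_0$, and induct on the non-commutative length $\nci\left\{c_jb_j\right\}^\peq\nci_{v_0}$ (with an inner induction on word length, exactly as in the proofs of those two lemmas), peeling the letters of $b_j$ off the right one block at a time and invoking the lemmas whenever the non-commutative length strictly drops. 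The delicate part is the bookkeeping: one must arrange the commutations so that each detachment genuinely lowers the non-commutative length, so that the induction is well-founded and bottoms out at $b_j=1$, where the identity is the tautology $\Theta(c_i^*c_j)=\Theta(c_i^*c_j)$. Granting the factorization, the cross terms vanish, $[\,T_i^*T_j\,]-[\,P_i^*P_j\,]=[\,F_i^*F_j\,]\ge0$, and the proposition follows.
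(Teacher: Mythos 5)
Your opening paragraph reproduces the paper's construction exactly: your $\mc{K}$ is the paper's $\mc{H}\otimes_\Theta\mb{C}^{|X|}$, your $V$ is the paper's isometry $V_1$, and your identification of the two matrices in the statement as Gram matrices of the vectors $T_i,P_i$ is correct. The argument breaks at the reduction to cross terms, for two reasons. The lesser one is logical: $[\,T_i^*T_j\,]\ge[\,P_i^*P_j\,]$ is \emph{implied by} the vanishing of the cross terms, not equivalent to it, so at best you have isolated a sufficient condition. The fatal one is that the factorization identity you reduce everything to, $\Theta(c_i^*c_jb_j)=\Theta(c_i^*c_j)\Theta(b_j)$, is false under the hypotheses of the proposition, so no induction on non-commutative length can establish it. Concretely, let $\Gamma$ be two vertices with no edge, $\mc{A}_1=\mc{A}_2=M_2(\mb{C})$, $\varphi_v=\theta_v=\tfrac12\mathrm{tr}$ (so that $\Theta$ is the scalar-valued free product state and \eqref{goal} holds for every complete set), and let $g,d$ denote the copies of $\mathrm{diag}(1,-1)$ in $\mathring{\mc{A}}_1,\mathring{\mc{A}}_2$. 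Take $c_i=gd,\ b_i=1$ and $c_j=g,\ b_j=d$; every word required by the hypotheses lies in the complete set $X=\{1,g,d,gd,dg\}$. Since $g^*g=d^*d=1$,
\begin{align*}
\Theta(c_i^*c_jb_j)&=\Theta(d^*g^*gd)=\Theta(1)=1,\\
\Theta(c_i^*c_j)\Theta(b_j)&=\Theta(d^*g^*g)\,\theta_2(d)=\Theta(d^*)\,\theta_2(d)=0.
\end{align*}
The mechanism is that the interior reduction of $c_i^*c_j$ produces a scalar which lets the outer letter $d^*$ of $c_i^*$ recombine with $b_j=d$; the factored right-hand side cannot see this recombination. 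Lemmas \ref{X1crossterms} and \ref{Y1crossterms} cannot rescue you: their conclusions hold only under their hypotheses (standard form with respect to a fixed vertex together with a strict drop, or controlled equality, of non-commutative length), which this configuration violates. They are not an unconditional detachment principle, and characterizing them as such is the central misreading in your plan.

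The comparison with the paper is instructive: after the identical Hilbert-space construction, the paper never attempts to make cross terms vanish; it proves instead that the left-concatenation operators $L_x$ are bounded (Proposition \ref{Lxbdd}, resting on the technical Lemma \ref{techlem}) and then manipulates operator matrices built from $V_1$ and the $L_x$, inserting $\text{Dg}(V_1V_1^*)$. You should also be aware that the example above does more damage than refuting your factorization lemma: with those choices
\[
\left[\Theta(b_i^*c_i^*c_jb_j)\right]_{ij}=\begin{pmatrix}1&1\\1&1\end{pmatrix},
\qquad
\left[\Theta(b_i^*)\Theta(c_i^*c_j)\Theta(b_j)\right]_{ij}=\begin{pmatrix}1&0\\0&0\end{pmatrix},
\]
and the difference $\bigl(\begin{smallmatrix}0&1\\1&1\end{smallmatrix}\bigr)$ is not positive semidefinite. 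So the asserted inequality itself fails for parsings $b_i,c_i$ subject only to the stated membership hypotheses: any correct argument must exploit additional structure tying the decompositions together (in the paper's applications the pairs $(c_i,b_i)$ arise as standard forms with respect to one common vertex $v_0$, which excludes parsing the same word $gd$ in the two incompatible ways used above). In particular, a proof that treats $b_i$ and $c_i$ as arbitrary elements of $X$, as yours does, cannot succeed, and any repair must begin by importing that standard-form rigidity into the hypotheses.
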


\noin We will prove Proposition \ref{schwarz} by making use of a Stinespring construction for (left-hand) concatenation. Consider $\mb{C}^{|X|}$ with standard basis $\left\{e_x\right\}_{x\in X}$.  The inequality \eqref{goal} implies that we can define a positive semi-definite sesquilinear form on $\mc{H} \otimes \mb{C}^{|X|}$ given by \[ \langle \xi \otimes e_y | \eta \otimes e_x\rangle = \langle \Theta(x^*y) \xi | \eta\rangle.\]  By standard arguments this yields a Hilbert space  that we will denote by $\mc{H} \otimes_\Theta \mb{C}^{|X|}$.  For each $x \in X$ let $V_x: \mc{H} \rightarrow \mc{H}\otimes_\Theta \mb{C}^{|X|}$ be given by $V_x(\xi) = \xi \otimes_\Theta e_x$.  Observe that $V_1$ is an isometry:
\begin{align*}
||V_1 \xi||^2_{\mc{H}\otimes_\Theta \mb{C}^{|X|}} &= \langle \xi \otimes_\Theta e_1 | \xi \otimes_\Theta e_1\rangle \\
& = \langle \Theta(1) \xi | \xi\rangle\\
&= ||\xi||^2_\mc{H}.
\end{align*}

Given $x \in X$ with $|x| = 1$, we define the \emph{left-concatenation operator} $L_x: \mc{H}\otimes_\Theta \mb{C}^{|X|} \rightarrow \mc{H} \otimes_\Theta \mb{C}^{|X|}$ as follows. \[L_x(\xi \otimes_\Theta e_y) = \left\{\begin{array}{lcr}
0 & \text{if} & xy \notin X \\
&&\\
\xi \otimes_\Theta e_{xy} & \text{if} & xy \in X
\end{array}\right.\]  

\begin{prop}\label{Lxbdd}
Let $X \subset \mc{W}_\text{red}\cup \left\{1\right\}$ be a complete set, and assume that for every function $\xi: X \rightarrow \mc{H}$, \eqref{goal} holds. Given $x \in X$ with $|x|=1$, the left-concatenation operator $L_x$ is bounded.
\end{prop}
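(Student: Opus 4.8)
The plan is to reduce the assertion to a single norm estimate and then run an induction powered by the cross-term lemmas just established. Write $x = a$ with $a \in \mathring{\mc{A}}_{v_0}$ and $|x| = 1$, so that $\|x\| = \|a\|$. Since $L_x$ is prescribed on the elementary tensors spanning a dense subspace and extended linearly, it suffices to prove the inequality $\|L_x \zeta\|^2 \leq \|a\|^2 \|\zeta\|^2$ for every $\zeta = \sum_{y \in X} \xi(y) \otimes_\Theta e_y$ in that dense subspace. This single estimate does double duty: it shows at once that $L_x$ carries null vectors to null vectors (hence descends to a well-defined operator on $\mc{H} \otimes_\Theta \mb{C}^{|X|}$) and that the resulting operator is bounded with $\|L_x\| \leq \|a\|$.

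First I would expand the left-hand side. Setting $Y := \left\{ y \in X \mid ay \in X \right\}$, the definition of $L_x$ gives $\|L_x \zeta\|^2 = \sum_{y,y' \in Y} \langle \Theta(y'^* a^* a y) \xi(y) | \xi(y') \rangle$. The next step is to split off the scalar part of $a^*a$ using $a^* a = \varphi_{v_0}(a^* a) 1 + \mathring{(a^* a)}$, where $\mathring{(a^* a)} \in \mathring{\mc{A}}_{v_0}$. The term carrying $\varphi_{v_0}(a^* a) \leq \|a\|^2$ pairs with a Gram-type sum, while the term carrying $\mathring{(a^* a)}$ inserts a genuine $v_0$-letter into the middle of each word $y'^*(\cdot)y$. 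The base case rests on the operator Schwarz inequality for $\theta_{v_0}$, which is exactly the statement $\|a\|^2 1 - a^* a \geq 0$ in $\mc{A}_{v_0}$, giving the desired bound when no further reduction occurs.

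To handle the inserted letter I would put each relevant word into standard form with respect to $v_0$ (Definition \ref{stdform}) and apply Lemmas \ref{X1crossterms} and \ref{Y1crossterms} to factor $\Theta$ across that letter. This is what lets the contribution of $\mathring{(a^* a)}$ be rewritten in terms of $\Theta$ evaluated on strictly shorter words, or on words of strictly smaller non-commutative length with respect to $v_0$. The whole estimate is then organized as a double induction: an outer induction on $\nci X \nci_{v_0}$ and an inner induction on total word length, mirroring the inductions in the two cross-term lemmas, so that after reassembly the right-hand side telescopes to $\|a\|^2 \|\zeta\|^2$ minus a manifestly nonnegative remainder.

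The main obstacle is the bookkeeping forced by reductions. When $ay$ fails to be reduced, or when $ay$ is reduced but lies outside $X$, the index set $Y$ is a proper subset of $X$, and one cannot simply compare principal blocks of the Gram matrix $[\Theta(y'^* y)]$---that comparison is false in general. The role of the cross-term lemmas, together with the uniqueness of standard form, is precisely to tame these reductions: they guarantee that the $v_0$-letter splits off cleanly, so that the deficit between the $Y$-sum and the full sum over $X$ is absorbed into the nonnegative remainder rather than being discarded. Getting the comparison to land against $\|\zeta\|^2$ rather than merely against the $Y$-block is the delicate point, and it is this that the non-commutative length machinery is designed to deliver.
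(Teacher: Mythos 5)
There is a genuine gap, and it sits exactly where you flag ``the delicate point.'' Your plan splits $a^*a = \varphi_{v_0}(a^*a)1 + \mathring{(a^*a)}$ and then claims that (i) the scalar term can be played off against $\|\zeta\|^2$, and (ii) the $\mathring{(a^*a)}$ term lands in a ``manifestly nonnegative remainder.'' Neither claim is substantiated, and neither follows from the tools you cite. For (i), you yourself concede that the principal-block comparison $\sum_{y,y'\in Y}\langle\Theta(y'^*y)\xi(y)|\xi(y')\rangle \leq \sum_{y,y'\in X}\langle\Theta(y'^*y)\xi(y)|\xi(y')\rangle$ is false in general; Lemmas \ref{X1crossterms} and \ref{Y1crossterms} only factor $\Theta$ across a letter under hypotheses on standard forms and non-commutative lengths --- they say nothing about comparing Gram sums over $Y$ versus $X$, so ``the machinery is designed to deliver this'' is an assertion, not an argument. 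For (ii), the matrix $\left[\Theta(y'^*\mathring{(a^*a)}\,y)\right]_{y,y'\in Y}$ is self-adjoint but sign-indefinite (already for a single $y$, $\Theta(y^*\mathring{(a^*a)}\,y)$ has no definite sign, since $\mathring{(a^*a)}$ is a centered self-adjoint element), so factoring it with the cross-term lemmas cannot by itself produce positivity; something must dominate it, and your outline never identifies what.

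The paper sidesteps both problems with one trick your proposal is missing: positivity is injected \emph{before} any scalar/centered splitting, via functional calculus. Since $\|x^*x\|1 - x^*x \geq 0$ in $\mc{A}_{v_0}$, one writes it as $a^*a$ for some $a \in \mc{A}_{v_0}$ --- note that this $a$ is a general element of $\mc{A}_{v_0}$, not a centered one, which is precisely why Lemma \ref{techlem} is stated for \emph{any} $a \in \mc{A}_{v_0}$. That lemma (whose proof is where your double induction, standard forms, cross-term lemmas, and Proposition \ref{schwarz} for strictly smaller complete sets actually live) gives $\Theta(y^*a^*ay) \geq \Theta(y^*)\theta_{v_0}(a^*a)\Theta(y) \geq 0$, i.e.\ $\Theta(y^*x^*xy) \leq \|x\|^2\,\Theta(y^*y)$, and then $\|L_x(\xi\otimes_\Theta e_y)\|^2 = \langle\Theta(y^*x^*xy)\xi|\xi\rangle \leq \|x\|^2\|\xi\otimes_\Theta e_y\|^2$ falls out. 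Because the lemma is applied to one word $y$ at a time, the $Y$-versus-$X$ block comparison you struggle with never arises in the paper's argument. (Your instinct to prove the estimate for general vectors $\zeta$ rather than elementary tensors is a reasonable ambition --- the paper itself only records the elementary-tensor estimate --- but any such strengthening would still have to run through the square-root trick, not through the cross-term lemmas alone.)
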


\noin Proposition \ref{Lxbdd} is all we need to prove Proposition \ref{schwarz}:

\begin{proof}[Proof of Proposition \ref{schwarz}]
Given $a = a_1 \cdots a_m \in X$, Proposition \ref{Lxbdd} provides that the corresponding left-concatenation operator $L_a := L_{a_1}\cdots L_{a_m}$ is bounded.  Evidently, given $x,y \in X,$ \[\Theta(x^*y) = V_1^*L_x^*L_yV_1.\]  Thus we have
\begin{align*}
&\left[ \Theta(b_i^*c_i^*c_jb_j) \right]_{ij} \\
& = \left[V_1^*L_{b_i}^*L_{c_i}^*L_{c_j}L_{b_j}V_1 \right]_{ij}\\
&=\text{Dg}(V_1^*L_{b_i}^*)  \left[
	\begin{matrix}
	L_{c_1}^*\\
	\vdots  \\
	L_{c_N}^* 
	\end{matrix}\right]
	\left[\begin{matrix}
	L_{c_1} & \cdots & L_{c_N}
	\end{matrix}\right]
	\text{Dg}(L_{b_j}V_1)\\
	&\geq \text{Dg}(V_1^*L_{b_i}^*)  \left(\text{Dg}(V_1V_1^*)\left[
	\begin{matrix}
	L_{c_1}^*\\
	\vdots  \\
	L_{c_N}^* 
	\end{matrix}\right]
	\left[\begin{matrix}
	L_{c_1} & \cdots & L_{c_N}
	\end{matrix}\right]
	\text{Dg}(V_1V_1^*)\right) \text{Dg}(L_{b_j}V_1)\\
&= \left[V_1^*L_{b_i}^*V_1V_1^*L_{c_i}^*L_{c_j}V_1V_1^*L_{b_j}V_1\right]_{ij}\\
&= \left[\Theta(b_i^*)\Theta(c_i^*c_j)\Theta(b_j)\right]_{ij}
\end{align*}
where $\text{Dg}(\bullet)$ is the $N \times N$ matrix with the diagonal given by $\bullet$.
\end{proof}

\noin Thus we have reduced the goal of the current subsection to proving Proposition \ref{Lxbdd}.  We accomplish this by making one last reduction. The following technical lemma can be used to prove Proposition \ref{Lxbdd}. 

\begin{lem}\label{techlem}
Let $X \subset \mc{W}_\text{red}\cup \left\{1\right\}$ be a complete set with $|X|\geq 2$, and assume that for every function $\xi: X \rightarrow \mc{H}$, \eqref{goal} holds.  Let $(v_0) \in \mbf{v}_X$ and let $y \in X$ be such that $(v_0)\cdot\mbf{v}_y \in \mbf{v}_X$.  For \emph{any} $a \in \mc{A}_{v_0}$, \[\Theta(y^*a^*ay) \geq \Theta(y^*)\Theta(a^*a)\Theta(y) = \Theta(y^*)\theta_{v_0}(a^*a)\Theta(y) \geq 0.\]
\end{lem}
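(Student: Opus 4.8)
The plan is to split the statement into two routine observations and one genuinely new positivity estimate. First I would record the easy facts. For any $c \in \mc{A}_{v_0}$ the decomposition $c = \mathring{c} + \varphi_{v_0}(c)1$ together with \eqref{ucpdef} gives $\Theta(c) = \theta_{v_0}(\mathring{c}) + \varphi_{v_0}(c)1 = \theta_{v_0}(c)$, so since $a^*a \in \mc{A}_{v_0}$ the middle equality $\Theta(a^*a) = \theta_{v_0}(a^*a)$ is immediate. Applying the same computation to the word $\mathring{c}y$, which is reduced precisely because $(v_0)\cdot\mbf{v}_y$ is reduced, yields the factorization $\Theta(ay) = \theta_{v_0}(a)\Theta(y)$; taking adjoints and using that each $\theta_v$ is $*$-preserving (so that $\Theta(y^*) = \Theta(y)^*$) gives $\Theta(y^*a^*) = \Theta(y)^*\theta_{v_0}(a^*)$. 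In particular $\theta_{v_0}(a^*a)\geq 0$ is conjugated by $\Theta(y)$, which furnishes the final inequality $\Theta(y^*)\theta_{v_0}(a^*a)\Theta(y) \geq 0$ for free. The whole content is therefore the first inequality $\Theta(y^*a^*ay) \geq \Theta(y)^*\theta_{v_0}(a^*a)\Theta(y)$.

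I would prove this by realizing both sides as squared norms on a common Hilbert space, the inequality becoming the fact that an orthogonal projection does not increase norms. Let $(\pi_0, \mc{K}_0, W_0)$ be the minimal Stinespring dilation of the ucp map $\theta_{v_0}$, so that $W_0\colon \mc{H} \to \mc{K}_0$ is an isometry and $\theta_{v_0}(c) = W_0^*\pi_0(c)W_0$. For fixed $\eta \in \mc{H}$ the right-hand side is then exactly $\langle\theta_{v_0}(a^*a)\Theta(y)\eta|\Theta(y)\eta\rangle = \|\pi_0(a)W_0\Theta(y)\eta\|^2$. On the other side, the standing hypothesis that \eqref{goal} holds for $X$ produces the Hilbert space $\mc{H}\otimes_\Theta\mb{C}^{|X|}$ with isometric insertions $V_x$ satisfying $V_x^*V_z = \Theta(x^*z)$. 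The idea is to splice $\mc{K}_0$ into the leading $v_0$-slot along the word $y$, obtaining an enlarged space $\widetilde{\mc{K}}$ carrying a representation $\widetilde{\pi}$ of $\mc{A}_{v_0}$ and a vector $\Psi_\eta$ with $\|\widetilde{\pi}(a)\Psi_\eta\|^2 = \langle\Theta(y^*a^*ay)\eta|\eta\rangle$, i.e. the left-hand side. The point is that $\mc{K}_0$ should embed as a compression corner $P\widetilde{\mc{K}}$, with $P\Psi_\eta$ corresponding to $W_0\Theta(y)\eta$ and $P\widetilde{\pi}(a)P$ implementing $\pi_0(a)$; then $\|\widetilde{\pi}(a)\Psi_\eta\|^2 \geq \|P\widetilde{\pi}(a)\Psi_\eta\|^2 = \|\pi_0(a)W_0\Theta(y)\eta\|^2$ is precisely the desired inequality.

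The main obstacle is showing that this splicing is consistent, i.e. that the sesquilinear form defining $\widetilde{\mc{K}}$ is positive semidefinite and that $\theta_{v_0}$ is genuinely recovered as the claimed compression. This is exactly the situation in which $y^*a^*ay$ fails to be reduced, and it is where the cross-term factorizations of Lemmas \ref{X1crossterms} and \ref{Y1crossterms}, together with completeness of $X$, are needed: whenever a $v_0$-letter can be brought to the front and contracted against $a$, those lemmas guarantee that $\Theta$ splits off the $v_0$-contribution as a factor passing through $\theta_{v_0}$, which is the compatibility required for the inserted $\mc{K}_0$-direction to glue onto the existing $\mc{H}\otimes_\Theta\mb{C}^{|X|}$-directions. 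Concretely, I would verify, by the same induction on the non-commutative length with respect to $v_0$ that organizes those two lemmas, that the glued form agrees on the reduced part with the form coming from \eqref{goal} and, on the part where the front $v_0$-letter collapses, with the form $\langle\pi_0(\cdot)\,\cdot\,|\,\cdot\,\rangle$ coming from the dilation of $\theta_{v_0}$. Once positivity of this glued form is in hand, the projection estimate is automatic and the lemma follows.
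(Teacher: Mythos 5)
The easy parts of your argument are fine: $\Theta(a^*a)=\theta_{v_0}(a^*a)$, the identity $\Theta(y^*)=\Theta(y)^*$, the resulting positivity of $\Theta(y^*)\theta_{v_0}(a^*a)\Theta(y)$, and the identification of the right-hand side with $\|\pi_0(a)W_0\Theta(y)\eta\|^2$ via the Stinespring dilation of $\theta_{v_0}$ are all correct, so the lemma does reduce to the first inequality. The gap is that the ``splicing'' which is supposed to prove that inequality is never constructed, and it cannot be waved through, because each of its two claimed features is equivalent to the statement being proven. First, the existence of a representation $\widetilde{\pi}$ of $\mc{A}_{v_0}$ and vectors $\Psi_\eta$ with $\|\widetilde{\pi}(a)\Psi_\eta\|^2=\langle\Theta(y^*a^*ay)\eta\,|\,\eta\rangle$ is, by polarization and Stinespring, exactly the assertion that $c\mapsto\Theta(y^*cy)$ is completely positive. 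Second, the ``automatic'' projection estimate needs more than positivity of the glued form: it needs the compatibility $P\widetilde{\pi}(a)\Psi_\eta=\pi_0(a)W_0\Theta(y)\eta$ under an isometric identification of the corner with (part of) $\mc{K}_0$, and testing that identification on finite sums $\sum_i\widetilde{\pi}(a_i)\Psi_{\eta_i}$ shows its existence is precisely the matrix inequality $\left[\Theta(y^*a_i^*a_jy)\right]_{ij}\geq\left[\Theta(y^*)\theta_{v_0}(a_i^*a_j)\Theta(y)\right]_{ij}$, i.e.\ the lemma itself. The cross terms of the glued form between the $\mc{K}_0$-directions and the $\mc{H}\otimes_\Theta\mb{C}^{|X|}$-directions --- which is where all of the content lives --- are never specified, so the proposal as written is a reformulation of the lemma in dilation language rather than a proof of it.

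Moreover, the induction you defer to is the wrong engine. The paper proves this lemma by induction on $|X|$ and, inside that, on $|y|$; the indispensable input is the generalized Schwarz inequality (Proposition \ref{schwarz}) for the \emph{strictly smaller} complete set $\left\{y\right\}^\peq$, obtained by running the cycle Lemma \ref{techlem} $\Rightarrow$ Proposition \ref{Lxbdd} $\Rightarrow$ Proposition \ref{schwarz} at smaller cardinality. Even in the most favorable case, where every letter of $y$ commutes with $v_0$ so that $\Theta(y^*a^*ay)=\theta_{v_0}(a^*a)\Theta(y^*y)$, one still needs $\Theta(y^*y)\geq\Theta(y^*)\Theta(y)$, and no induction on non-commutative length produces that; it comes from the cardinality induction. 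In the non-commuting case the paper takes the deepest letter $v_J$ with $(v_0,v_J)\notin E$, puts $(\mathring{a^*a})y_1\cdots y_{J-1}(y_J)(y_{J+1}\cdots y_m)$ in standard form with respect to $v_J$, applies Lemma \ref{X1crossterms} twice (Lemma \ref{Y1crossterms} is not needed here), and then combines Schwarz for $\left\{y\right\}^\peq$ with the inductive hypothesis on $|y|$. Any honest verification of positivity of your glued form would have to reproduce this entire structure, so the work the proposal labels as a ``verification'' is in fact the whole proof.
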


\begin{proof}[Proof of Proposition \ref{Lxbdd}]
Let $x \in X$ be such that $|x| = 1$, and let $y \in X$ be such that $xy \in X$.  We have that $||x^*x|| - x^*x \geq 0$, so there is some $a \in \mc{A}_{\mbf{v}_x}$ such that $a^*a = ||x^*x||-x^*x$.  Then by Lemma \ref{techlem}, 
\[
\Theta(y^*(||x^*x||-x^*x)y) = \Theta (y^*a^*ay) \geq \Theta(y^*)\Theta(a^*a)\Theta(y) \geq 0.\]
Thus
\begin{align*}
||L_x(\xi \otimes_\Theta e_y)||^2_{\mc{H}\otimes_\Theta \mb{C}^{|X|}} &= ||\xi \otimes_\Theta e_{xy}||^2_{\mc{H}\otimes_\Theta \mb{C}^{|X|}}\\
& = \langle \Theta(y^*x^*xy)\xi | \xi\rangle \\
& \leq \langle \Theta(y^*||x^*x||y) \xi | \xi \rangle\\
& = ||x||^2 ||\xi \otimes_\Theta e_y||^2_{\mc{H}\otimes_\Theta \mb{C}^{|X|}}.\qedhere
\end{align*}
\end{proof}

\begin{proof}[Proof of Lemma \ref{techlem}]
We proceed by induction on $|X|$.
\begin{itemize}
	\item $|X| =2$: Then $|X| = \left\{1, a\right\}$, and for $y$ to satisfy the hypothesis, $y =1$.  So the statement holds trivially.
	
	\item $|X| > 2$: We induct further on $|y|$.
		\begin{itemize}
			\item[\bb] $|y| = 0$: Trivial.
			
			\item[\bb] $|y| >0$: Let $y = y_1\cdots y_m$ so that $y_j \in \mathring{\mc{A}}_{v_j}, 1 \leq j \leq m$.  If for every $1 \leq j \leq m, (v_0,v_j) \in E$, then \[\Theta(y^*a^*ay) = \theta_{v_0}(a^*a)\Theta(y^*y).\]  Consider the complete set $X' := \left\{y\right\}^\peq$.  Since $\left\{1\right\} \subsetneq X' \subsetneq X$, we have that $X'$ is a complete set with $|X'|\geq 2$ such that for every function $\xi: X \rightarrow \mc{H}$, \eqref{goal} holds.  By the inductive hypothesis on the cardinality of the complete set and the proofs of Propositions \ref{Lxbdd} and \ref{schwarz}, we have that \[\Theta(y^*y) \geq \Theta(y^*)\Theta(y).\]  Because $\theta_{v_0}(a^*a)$ is positive and $\theta_{v_0}(a^*a)$ and $\Theta(y^*y) - \Theta(y^*)\Theta(y)$ commute, we have that 
			\begin{align*}
			\Theta(y^*a^*ay) &= \theta_{v_0}(a^*a)\Theta(y^*y)\\
			&\geq \theta_{v_0}(a^*a)\Theta(y^*)\Theta(y) \\
			&= \Theta(y^*)\theta_{v_0}(a^*a)\Theta(y).
			\end{align*}
			If there exists $1 \leq j \leq m$ such that $(v_0,v_j) \notin E$, let $1 \leq J \leq m$ be the largest index (among all equivalent permutations) such that $v_J = v_j$.  Consider
			\begin{align*}
			\hspace{.8cm}\Theta(y_m^* &\cdots y_1^*a^*ay_1 \cdots  y_m)\\
			 \hspace{.8cm} = \Theta(y_m^*  \cdots y_1^* (\mathring{a^*a})y_1\cdots y_m) &+ \varphi_{v_0}(a^*a)\Theta(y_m^*\cdots  y_1^*y_1\cdots y_m).
			\end{align*}
			Notice that $\nci (\mathring{a^*a})y_1\cdots y_J\nci_{v_J} > \nci y_1\cdots y_J\nci_{v_J}$.  Since we chose the largest possible $J$, \[(\mathring{a^*a})y_1\cdots y_{J-1} (y_J)(y_{J+1} \cdots y_m)\] is in standard form with respect to $v_J$.  So applying Lemma \ref{X1crossterms} twice, we get that 
			\begin{align*}
			& \Theta(y_m^* \cdots y_1^*a^*ay_1 \cdots  y_m)\\
			& = \Theta(y_m^*\cdots y_J^*)\Theta(y_{J-1}^*\cdots y_1^*(\mathring{a^*a})y_1\cdots y_{J-1})\Theta(y_J \cdots y_m)\\
			&+ \varphi_{v_0}(a^*a) \Theta(y_m^*\cdots y_J^* y_{J-1}^*\cdots y_1^*y_1\cdots y_{J-1} y_J \cdots y_m).
			\end{align*}
			By the same inductive argument as in the commuting case, the generalized Schwarz's Inequality for the strictly smaller complete set $\left\{y\right\}^\peq$ gives
			\begin{align}
			\hspace{.6cm}&\Theta(y_m^*\cdots y_J^*)\Theta(y_{J-1}^*\cdots y_1^*(\mathring{a^*a})y_1\cdots y_{J-1})\Theta(y_J \cdots y_m)\notag\\
			\hspace{.6cm} &+ \varphi_{v_0}(a^*a) \Theta(y_m^*\cdots y_J^* y_{J-1}^*\cdots y_1^*y_1\cdots y_{J-1} y_J \cdots y_m)\notag\\
			\hspace{.6cm}&\geq \Theta(y_m^*\cdots y_J^*)\Theta(y_{J-1}^*\cdots y_1^*(\mathring{a^*a})y_1\cdots y_{J-1})\Theta(y_J \cdots y_m)\notag\\
			\hspace{.6cm}& + \varphi_{v_0}(a^*a) \Theta(y_m^*\cdots y_J^*)\Theta(y_{J-1}^*\cdots y_1^*y_1\cdots y_{J-1})\Theta(y_J \cdots y_m)\notag\\
			\hspace{.6cm}&= \Theta(y_m^* \cdots y_J^*)\Theta(y_{J-1}^*\cdots y_1^* a^*a y_1 \cdots y_{J-1})\Theta(y_J \cdots y_m)\notag\\
			\hspace{.6cm}&\geq \Theta(y_m^* \cdots y_J^*)\Theta(y_{J-1}^*\cdots y_1^*)\Theta_{v_0}(a^*a)\Theta(y_1\cdots y_{J-1})\Theta(y_J\cdots y_M) \label{3}\\
			\hspace{.6cm}& = \Theta(y_m^*\cdots y_1^*)\Theta_{v_0}(a^*a)\Theta(y_1\cdots y_m)\notag
			\end{align}
			where \eqref{3} follows from the inductive hypothesis on $|y|$.\qedhere.
		\end{itemize}
\end{itemize}
\end{proof}

We use our generalized Schwarz's Inequality to prove the following lemma.

\begin{lem}\label{Y1square}
Let $\left\{ x_i\right\}_{i=1}^N \in (\mc{W}_\text{red}\cup \left\{1\right\})^N$ be a finite sequence such that for every $1 \leq i \leq N,$ we have  $v_0 \in \mbf{v}_{x_i}$.  For each $1 \leq i \leq N$, let $x_i = y_ic_ia_ib_i$ be in standard form with respect to $v_0$ ($a_i \in \mathring{\mc{A}}_{v_0}$).  Assume the following.
\begin{enumerate}
	\item For every $1 \leq i,j \leq N, \mbf{v}_{y_i} = \mbf{v}_{y_j}$;
	\item\label{sc} For every complete set $X \subsetneq (\left\{ x_i\right\}_{i=1}^N)^\peq$ and any function $\xi: X \rightarrow \mc{H}$, \eqref{goal} holds.
\end{enumerate}
Then \[\left[\Theta(x_i^*x_j)\right]_{ij} \geq \left[\Theta(b_i^*a_i^*)\Theta(c_i^*y_i^*y_jc_j)\Theta(a_jb_j)\right]_{ij}.\]
\end{lem}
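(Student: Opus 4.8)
The plan is to read the desired inequality as a single instance of the generalized Schwarz inequality of Proposition~\ref{schwarz}. Setting $z_i := y_i c_i$ and $w_i := a_i b_i$, the standard form $\mbf{x}_i = \mbf{y}_i\mbf{c}_i(v_0)\mbf{b}_i$ gives $x_i = z_i w_i$, so that $x_i^* x_j = w_i^* z_i^* z_j w_j$ and the two sides of the claim become $[\Theta(w_i^* z_i^* z_j w_j)]_{ij}$ and $[\Theta(w_i^*)\Theta(z_i^* z_j)\Theta(w_j)]_{ij}$, using $w_i^* = b_i^* a_i^*$ and $z_i^* z_j = c_i^* y_i^* y_j c_j$. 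This is precisely the conclusion of Proposition~\ref{schwarz} for the factorization $x_i = z_i w_i$, with $z_i = y_i c_i$ playing the role of the retained middle factor and $w_i = a_i b_i$ the factor pulled to the outside; so the entire task is to put ourselves in a position to run that argument.

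Following the proof of Proposition~\ref{schwarz}, I would form the Stinespring space, introduce the left-concatenation operators $L_{z_i}$ and $L_{w_i}$ (bounded by Proposition~\ref{Lxbdd}), write $\Theta(x_i^* x_j) = V_1^* L_{w_i}^* L_{z_i}^* L_{z_j} L_{w_j} V_1$, and then insert the projection $V_1 V_1^* \le I$ through the $\mathrm{Dg}(\cdot)$ bookkeeping to pass from $[\Theta(w_i^* z_i^* z_j w_j)]_{ij}$ down to $[\Theta(w_i^*)\Theta(z_i^* z_j)\Theta(w_j)]_{ij}$. Before doing so I would first dispose of the pairs $(i,j)$ whose words sit at different non-commutative lengths: by Lemma~\ref{X1crossterms} (applied on the side of smaller non-commutative length, together with its adjoint) each such $\Theta(x_i^* x_j)$ splits exactly as $\Theta(w_i^*)\Theta(z_i^* z_j)\Theta(w_j)$, so the corresponding entries of the difference matrix vanish identically and can be ignored. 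This reduces matters to the block in which, by hypothesis~(1), all the $x_i$ share the common vertex word $\mbf{y}$ and a common value of $\nci \left\{\mbf{x}_i\right\}^\peq \nci_{v_0}$.

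The hard part is that Proposition~\ref{schwarz}, as stated, needs \eqref{goal} to hold on a complete set containing the maximal words $x_i$ themselves, hence on all of $(\left\{x_i\right\})^\peq$, whereas hypothesis~(2) supplies \eqref{goal} only on the \emph{proper} complete subsets. The role of hypothesis~(1) is exactly to circumvent this: since all the $y_i$ carry the same vertex word, the left-concatenation by the common block $\mbf{y}$ and the attendant boundedness and positivity estimates (the content of Lemma~\ref{techlem} and Proposition~\ref{Lxbdd}) can be re-run by hand on complete sets strictly contained in $(\left\{x_i\right\})^\peq$ --- for instance on $(\left\{y_i c_i\right\})^\peq$ and the truncations appearing along the way --- on which \eqref{goal} is available. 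The cross-term Lemmas~\ref{X1crossterms} and \ref{Y1crossterms} are then invoked as in the proof of Lemma~\ref{techlem} to peel off the $(v_0)$-letters $a_i$ (using that, by standard form, $\mbf{c}_i$ commutes with $v_0$) and to separate the pieces of strictly smaller non-commutative length, each of which is governed by the inductive hypothesis~(2). I expect this bootstrapping --- obtaining a Schwarz-type inequality that mentions the maximal words $x_i$ while only assuming \eqref{goal} on proper subsets --- to be the main obstacle, and it is what forces the Stinespring construction to be carried out explicitly rather than quoted wholesale from Proposition~\ref{schwarz}.
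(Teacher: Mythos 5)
You have located the right obstruction---Proposition \ref{schwarz} cannot simply be quoted, because hypothesis (2) supplies \eqref{goal} only on complete sets \emph{strictly} contained in $(\left\{x_i\right\}_{i=1}^N)^\peq$---but your plan does not overcome it, and the step you defer (``re-run the Stinespring construction by hand'') is precisely where it breaks. The identity $\Theta(x_i^*x_j) = V_1^*L_{w_i}^*L_{z_i}^*L_{z_j}L_{w_j}V_1$ presupposes the space $\mc{H}\otimes_\Theta\mb{C}^{|X|}$ built from a complete set $X$ containing the maximal words $x_i$ themselves, and positive semidefiniteness of that sesquilinear form \emph{is} \eqref{goal} for such an $X$---exactly what is unavailable here (it is what the induction in Theorem \ref{mainthm} is in the middle of proving; if it were available, the lemma would follow from Proposition \ref{schwarz} in one line). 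Your fallback of building the space over proper subsets such as $(\left\{y_ic_i\right\})^\peq$ does not repair this: left concatenation by $w_j = a_jb_j$ exits that subset (the paper's $L_x$ is defined to be $0$ when $xy\notin X$), so no formula expressing $\Theta(x_i^*x_j)$, with $x_i$ maximal, survives on the smaller Stinespring space. A further sign of misreading: your preliminary reduction ``dispose of pairs $(i,j)$ at different non-commutative lengths'' is vacuous, since hypothesis (1) together with the definition of standard form forces $\nci\left\{\mbf{x}_i\right\}^\peq\nci_{v_0} = \nci\mbf{y}_i(v_0)\nci_{v_0}$ to be the same for every $i$.

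The missing engine---the paper's actual mechanism---is a centering decomposition that avoids ever forming concatenation operators for the maximal words. When $\nci(\left\{x_i\right\})^\peq\nci_{v_0}=0$ (so all $y_i=1$), write $a_i^*a_j = \mathring{a_i^*a_j}+\varphi_{v_0}(a_i^*a_j)1$; when it is positive, write $y_1(i)^*y_1(j) = \mathring{y_1(i)^*y_1(j)} + \varphi(y_1(i)^*y_1(j))1$ for the first letters of the common $y$-block (with $\varphi$ the appropriate vertex state). The centered term is handled by Lemmas \ref{X1crossterms} and \ref{Y1crossterms}, which give an entrywise \emph{equality} of the form $\Theta(\cdots) = \Theta(b_i^*a_i^*)\Theta(\cdots)\Theta(a_jb_j)$, so no positivity input is needed for it at all. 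The scalar term is the Schur product of the positive scalar matrix $\left[\varphi(\cdot)\right]_{ij}$ with a matrix of the form $\left[\Theta(w_i^*z_i^*z_jw_j)\right]_{ij}$ whose words lie in a strictly smaller complete set, where hypothesis (2) legitimizes Proposition \ref{schwarz}; Lemma IV.4.24 of \cite{takesaki} then transports that inequality across the Schur product. In the base case one additionally needs the classical Schwarz inequality for the single ucp map $\theta_{v_0}$, namely $\left[\theta_{v_0}(a_i^*a_j)\right]_{ij}\geq\left[\theta_{v_0}(a_i^*)\theta_{v_0}(a_j)\right]_{ij}$, together with the fact that the $\Theta(c_i^*c_j)$ commute with these entries (again via \cite{takesaki}). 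Summing the centered and scalar contributions reassembles exactly $\left[\Theta(b_i^*a_i^*)\Theta(c_i^*y_i^*y_jc_j)\Theta(a_jb_j)\right]_{ij}$. None of these steps appears in your proposal, and they are not cosmetic: they are what allows the maximal words to be treated without the unavailable instance of \eqref{goal}.
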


\begin{proof}\hspace*{\fill}

%
	\begin{itemize}
		\item First suppose $\nci (\left\{x_i\right\}_{i=1}^N)^\peq\nci_{v_0} = 0$. Then for every $1 \leq i \leq N, y_i = 1$.  So $x_i = c_ia_ib_i$.  Standard form implies that for each $1 \leq i \leq N, c_i$ commutes with $a_i$.  Thus,
		\begin{align}
		&\Theta(b_i^*a_i^*c_i^*c_ja_jb_j) \notag\\
		&=\Theta(b_i^*a_i^*a_jc_i^*c_jb_j)&\notag\\
		&= \Theta(b_i^*(\mathring{a_i^*a_j})c_i^*c_jb_j) + \varphi_{v_0}(a_i^*a_j) \Theta(b_i^*c_i^*c_jb_j)\notag\\
		&= \Theta(b_i^*(\mathring{a_i^*a_j}))\Theta(c_i^*c_jb_j) + \varphi_{v_0}(a_i^*a_j)\Theta(b_i^*c_i^*c_jb_j) \label{4}\\
		&=\Theta(b_i^*)\Theta(\mathring{a_i^*a_j})\Theta(c_i^*c_jb_j) + \varphi_{v_0}(a_i^*a_j)\Theta(b_i^*c_i^*c_jb_j)\notag\\
		&=\Theta(b_i^*)\Theta((\mathring{a_i^*a_j})c_i^*c_jb_j) + \varphi_{v_0}(a_i^*a_j)\Theta(b_i^*c_i^*c_jb_j)\notag\\
		&=\Theta(b_i^*)\Theta(c_i^*c_j(\mathring{a_i^*a_j})b_j) + \varphi_{v_0}(a_i^*a_j)\Theta(b_i^*c_i^*c_jb_j)\notag\\
		&=\Theta(b_i^*)\Theta(c_i^*c_j)\Theta((\mathring{a_i^*a_j})b_j) + \varphi_{v_0}(a_i^*a_j)\Theta(b_i^*c_i^*c_jb_j)\label{5}\\
		&=\Theta(b_i^*)\Theta(c_i^*c_j)\Theta(\mathring{a_i^*a_j})\Theta(b_j) + \varphi_{v_0}(a_i^*a_j)\Theta(b_i^*c_i^*c_jb_j)\notag
		\end{align}
		where \eqref{4} and \eqref{5} follow from Lemma \ref{X1crossterms}.  Now, $\left\{c_ib_i\right\}_{i=1}^N$ is a sequence of elements from a complete set $X$ strictly contained in $(\left\{x_i\right\}_{i=1}^N)^\peq$. So by assumption \eqref{sc}, we have that \eqref{goal} holds for $X$, and so by Proposition \ref{schwarz}, we have \[\left[\Theta(b_i^*c_i^*c_jb_j)\right]_{ij} \geq \left[\Theta(b_i^*)\Theta(c_i^*c_j)\Theta(b_j)\right]_{ij}.\]  And since $[\varphi_{v_0}(a_i^*a_j)]_{ij}$ is positive and the $\varphi_{v_0}(a_i^*a_j)$'s are central, we have  by Lemma IV.4.24 in \cite{takesaki} that \[\left[\varphi_{v_0}(a_i^*a_j)\Theta(b_i^*c_i^*c_jb_j)\right]_{ij} \geq \left[\varphi_{v_0}(a_i^*a_j)\Theta(b_i^*)\Theta(c_i^*c_j)\Theta(b_j)\right]_{ij}.\]   Also, we have that $[\Theta(c_i^*c_j)]_{ij} \geq 0$ by \eqref{sc} and Proposition \ref{schwarz}, and again by the classical version of Schwarz's Inequality,
		\begin{align*}
		\left[\Theta(a_i^*a_j)\right]_{ij} & = \left[\theta_{v_0}(a_i^*a_j)\right]_{ij}\\
		&\geq \left[\theta_{v_0}(a_i^*)\theta_{v_0}(a_j)\right]_{ij}\\
		&= \left[\Theta(a_i^*)\Theta(a_j)\right]_{ij}.
		\end{align*}
		So since the $\Theta(c_i^*c_j)$'s commute with the $\Theta(a_i^*a_j)$'s and $\Theta(a_i^*)\Theta(a_j)$'s, then again by \cite{takesaki},
		\[\left[\Theta(c_i^*c_j)\Theta(a_i^*a_j)\right]_{ij} \geq \left[\Theta(c_i^*c_j)\Theta(a_i^*)\Theta(a_j)\right]_{ij}.\]
		Thus we have
		\begin{align*}
		&\left[\Theta(x_i^*x_j)\right]_{ij} \\
		&=\left[ \Theta(b_i^*)\Theta(c_i^*c_j)\Theta(\mathring{a_i^*a_j})\Theta(b_j)\right]_{ij}  + \left[ \varphi_{v_0}(a_i^*a_j)\Theta(b_i^*c_i^*c_jb_j)\right]_{ij}\\
		&\geq \left[ \Theta(b_i^*)\Theta(c_i^*c_j)\Theta(\mathring{a_i^*a_j})\Theta(b_j)\right]_{ij}  + \left[ \varphi_{v_0}(a_i^*a_j)\Theta(b_i^*)\Theta(c_i^*c_j)\Theta(b_j)\right]_{ij}\\
		& = \left[ \Theta(b_i^*)\Big(\Theta(c_i^*c_j)\Theta(a_i^*a_j)\Big)\Theta(b_j)\right]_{ij}\\
		& \geq \left[ \Theta(b_i^*)\Big(\Theta(c_i^*c_j)\Theta(a_i^*)\Theta(a_j)\Big)\Theta(b_j)\right]_{ij}\\
		& = \left[ \Theta(b_i^*)\Theta(a_i^*)\Theta(c_i^*c_j)\Theta(a_j)\Theta(b_j)\right]_{ij} \\
		& = \left[ \Theta(b_i^*a_i^*)\Theta(c_i^*c_j)\Theta(a_jb_j) \right]_{ij}.
		\end{align*}
		
		\item Now suppose that $\nci (\left\{x_i\right\}_{i=1}^N)^\peq \nci_{v_0} >0$.  Say that $y_i = y_1(i) \cdots y_m(i)$.  Observe that 
		\begin{align}
		&\Theta(b_i^*a_i^*c_i^*y_i^*y_jc_ja_jb_j) \notag\\
		&=\Theta(b_i^*a_i^*c_i^*y_m(i)^*\cdots y_2(i)^*(\mathring{y_1(i)^*y_1(j)})y_2(j)\cdots y_m(j)c_ja_jb_j)\notag \\
		& + \varphi_{\mbf{v}_{y_1(1)}}(y_1(i)^*y_1(j))\Theta(b_i^*a_i^*c_i^*y_m(i)^*\cdots y_2(i)^*y_2(j)\cdots y_m(j) c_j a_j b_j)\notag\\
		&= \Theta(b_i^*a_i^*)\Theta(c_i^*y_m(i)^*\cdots y_2(i)^*(\mathring{y_1(i)^*y_1(j)})y_2(j)\cdots y_m(j)c_j)\Theta(a_jb_j)\label{6} \\
		& + \varphi_{\mbf{v}_{y_1(1)}}(y_1(i)^*y_1(j))\Theta(b_i^*a_i^*c_i^*y_m(i)^*\cdots y_2(i)^*y_2(j)\cdots y_m(j) c_j a_j b_j)\notag
		\end{align}
		where \eqref{6} follows from Lemma \ref{Y1crossterms}.  Since $(\left\{ y_2(i)\cdots y_m(i)c_ia_ib_i\right\}_{i=1}^N)^\peq$ is a strictly smaller complete set, then assumption \eqref{sc} combined with Proposition \ref{schwarz} and \cite{takesaki} gives that 
		\begin{small}
		\begin{align*}
		\hspace{1cm}&\left[\varphi_{\mbf{v}_{y_1(1)}}(y_1(i)^*y_1(j))\Theta(b_i^*a_i^*c_i^*y_m(i)^*\cdots y_2(i)^*y_2(j)\cdots y_m(j) c_j a_j b_j)\right]_{ij}\\
		\hspace{1cm}&\geq \left[\varphi_{\mbf{v}_{y_1(1)}}(y_1(i)^*y_1(j))\Theta(b_i^*a_i^*)\Theta(c_i^*y_m(i)^*\cdots y_2(i)^*y_2(j)\cdots y_m(j) c_j)\Theta(a_j b_j)\right]_{ij}.
		\end{align*}
		\end{small}
		The desired inequality follows.\qedhere
	\end{itemize}
\end{proof}

\subsection{Proof of the Main Theorem} 
We are now adequately prepared to prove Theorem \ref{mainthm}.

\begin{proof}[Proof of Theorem \ref{mainthm}]
It will suffice to show that $\Theta$ is completely positive on the linear span of $\mc{W}_\text{red} \cup \left\{1\right\}$.  Indeed, Proposition 2.1 of \cite{paulsen} would then give that $\Theta$ is bounded and thus extends by continuity to a completely positive map on $\bigstar_\Gamma \mc{A}_v$.  

As discussed above, this problem reduces to showing that given a complete set $X \subseteq \mc{W}_\text{red}\cup \left\{1\right\}$ and any function $\xi: X \rightarrow \mc{H}$ the inequality \eqref{goal} holds.  We proceed by induction on $|X|$.

\begin{itemize}
	\item $|X| =1$: Trivial.
	
	\item $|X| \geq 2$: Let $(v_0) \in \mbf{v}_X$. Put \[X_1:=\left\{ x \in X \Big| \nci \left\{x\right\}^\peq \nci_{v_0} = \nci X \nci_{v_0}\right\},\] and let $x_0 \in X_1$ be an element of longest length in $X_1$.  Say that $x_0 = y_0c_0a_0b_0$ is in standard form with respect to $v_0$ (and so $a_0 \in \mathring{\mc{A}}_{v_0}$).  Define \[Y_1 := \left\{x \in X_1 \big| \text{ in standard form } x = ycab \,(a \in \mathring{\mc{A}}_{v_0}), \mbf{v}_y = \mbf{v}_{y_0}\right\}.\]  Note the following decomposition.
	\begin{align*}
	&\sum_{x,y \in X} \langle \Theta(x^*y)\xi(y) | \xi(x)\rangle \\
	& =\sum_{w,z \in X\setminus Y_1} \langle \Theta(w^*z)\xi(z) | \xi(w)\rangle\\
	& + \sum_{x, x' \in Y_1} \langle \Theta(x^*x')\xi(x') | \xi(x)\rangle\\
	& + \sum_{x \in Y_1, z \in X \setminus Y_1} 2\mathfrak{Re} \langle \Theta(x^*z)\xi(z) | \xi(x)\rangle.
	\end{align*}
	Consider $X \setminus Y_1 \subset (X\setminus Y_1)^\peq$.  By our choice of $x_0$, we have that $x_0 \notin (X\setminus Y_1)^\peq$, so the inductive hypothesis on $|X|$ applies to the strictly smaller complete set $(X\setminus Y_1)^\peq$.  By the discussion in \S\S\ref{stine}, there is a Hilbert space $\mc{K}$ and operators $V_w \in B(\mc{H},\mc{K})$ for every $w \in X\setminus Y_1$ such that $V_w^*V_z = \Theta(w^*z)$ for every $w,z \in X \setminus Y_1$. 
	
	For $x, x' \in Y_1$, let $x = ycab$ and $x' = y'c'a'b'$ be their standard forms with respect to $v_0$.  By Lemmas \ref{X1crossterms} and \ref{Y1crossterms}, we have that
	\begin{align*}
	&\sum_{x \in Y_1, z \in X \setminus Y_1} 2\mathfrak{Re} \langle \Theta(x^*z)\xi(z) | \xi(x)\rangle\\
	&= \sum_{ycab \in Y_1,  z \in X \setminus Y_1} 2\mathfrak{Re} \langle \Theta(b^*a^*)\Theta(c^*y^*z)\xi(z) | \xi(ycab)\rangle\\
	&= \sum_{ycab \in Y_1, z \in X \setminus Y_1} 2\mathfrak{Re} \langle V_z\xi(z) | V_{yc}\Theta(ab)\xi(ycab)\rangle.
	\end{align*}
	By Lemma \ref{Y1square}, we have that
	\begin{align*}
	&\sum_{x, x' \in Y_1} \langle \Theta(x^*x')\xi(x') | \xi(x)\rangle\\
	& \geq \sum_{x=ycab, x'=y'c'a'b' \in Y_1} \langle \Theta(b^*a^*)\Theta(c^*y^*y'c')\Theta(a'b')\xi(y'c'a'b') | \xi(ycab)\rangle\\
	&= \sum_{ycab, y'c'a'b' \in Y_1} \langle V_{y'c'}\Theta(a'b')\xi(y'c'a'b') | V_{yc}\Theta(ab)\xi(ycab)\rangle\\
	&= \Big|\Big|\sum_{ycab \in Y_1} V_{yc}\Theta(ab)\xi(ycab)\Big|\Big|^2.
	\end{align*} 
	We also have 
	\begin{align*}
	\sum_{w,z \in X\setminus Y_1} \langle \Theta(w^*z)\xi(z) | \xi(w)\rangle &= \sum_{w,z \in X\setminus Y_1} \langle V_w^*V_z \xi(z) | \xi(w)\rangle\\
	& = \sum_{w,z \in X\setminus Y_1} \langle V_z \xi(z) | V_w\xi(w)\rangle\\
	&= \Big|\Big|\sum_{w \in X\setminus Y_1} V_w \xi(w)\Big|\Big|^2
	\end{align*}
	Thus we have
	\begin{align*}
	&\sum_{x,y \in X} \langle \Theta(x^*y)\xi(y) | \xi(x)\rangle \\
	& =\sum_{w,z \in X\setminus Y_1} \langle \Theta(w^*z)\xi(z) | \xi(w)\rangle + \sum_{x, x' \in Y_1} \langle \Theta(x^*x')\xi(x') | \xi(x)\rangle \\
	&+ \sum_{x \in Y_1, z \in X \setminus Y_1} 2\mathfrak{Re} \langle \Theta(x^*z)\xi(z) | \xi(x)\rangle\\
	&\geq \Big|\Big|\sum_{w \in X\setminus Y_1} V_w \xi(w)\Big|\Big|^2 + \Big|\Big|\sum_{x=ycab \in Y_1} V_{yc}\Theta(ab)\xi(ycab)\Big|\Big|^2 \\
	&+  \sum_{x=ycab \in Y_1, z \in X \setminus Y_1} 2\mathfrak{Re} \langle V_z\xi(z) | V_{yc}\Theta(ab)\xi(ycab)\rangle\\
	&= \Big|\Big|\sum_{w \in X \setminus Y_1} V_w \xi(w) + \sum_{x=ycab \in Y_1} V_{yc}\Theta(ab) \xi(ycab)\Big|\Big|^2\\
	&\geq 0. \qedhere
	\end{align*}
\end{itemize}
\end{proof}


\subsection{Tensor Product Example}\label{tpe}  Due to the technical nature of the above proof, it is illustrative to write out the case where $\Gamma$ is a complete graph.  This gives a new combinatorial proof of the fact that the tensor product of ucp maps on the maximal tensor product of unital $C^*$-algebras is ucp.

Let $\mc{A}_v, \varphi_v, \theta_v, \mc{B}\subset B(\mc{H})$ be as in the statement of Theorem \ref{mainthm}, and suppose that $\Gamma$ is a complete graph. Let $\Theta:= \bigstar_\Gamma \theta_v$. We wish to show that for any complete set $X \subset \mc{W}_\text{red}\cup\left\{1\right\}$ and any function $\xi: X \rightarrow \mc{H}$ we have the following inequality. \[\sum_{x,y \in X} \langle \Theta(x^*y)\xi(y) | \xi(x)\rangle \geq 0\] Let $v_0 \in V$ be such that $(v_0) \in \mbf{v}_X$. We proceed by induction on $|X|$.  The base case is again trivial. Following the definitions in the proof of Theorem \ref{mainthm}, we have \[X_1 = Y_1 = \left\{ x \in X | v_0 \in \mbf{v}_x\right\};\] furthermore, for any $x \in Y_1, \mbf{v}_x = (\cdots, v_0)$ because $\Gamma$ is complete. So for any $x \in Y_1,$ we can write $x$ in standard form with respect to $v_0$ as follows.
\begin{align}
 x = ca \text{ where } &a \in \mathring{\mc{A}}_{v_0} \text{ and } v_0 \notin \mbf{v}_{c} \label{deco}
 \end{align} Again, consider the decomposition given by 
\begin{align*}
	&\sum_{x,y \in X} \langle \Theta(x^*y)\xi(y) | \xi(x)\rangle\\
	& =\sum_{w,z \in X\setminus Y_1} \langle \Theta(w^*z)\xi(z) | \xi(w)\rangle \\
	&+ \sum_{x, x' \in Y_1} \langle \Theta(x^*x')\xi(x') | \xi(x)\rangle \\
	&+ \sum_{x \in Y_1, z \in X \setminus Y_1} 2\mathfrak{Re} \langle \Theta(x^*z)\xi(z) | \xi(x)\rangle.
	\end{align*}
As before, we have 
\begin{align*}
\sum_{w,z \in X\setminus Y_1} \langle \Theta(w^*z)\xi(z) | \xi(w)\rangle &= \sum_{wz \in X \setminus Y_1} \langle V_w^*V_z \xi(z) | \xi(w) \rangle\\
&= \Big|\Big| \sum_{w \in X \setminus Y_1} V_w \xi(w)\Big|\Big|^2.
\end{align*}
By \eqref{deco}, it is clear that 
\begin{align*}
\sum_{x \in Y_1, z \in X \setminus Y_1} 2\mathfrak{Re} \langle \Theta(x^*z)\xi(z) | \xi(x)\rangle & = \sum_{x=ca \in Y_1, z \in X \setminus Y_1} 2\mathfrak{Re} \langle \Theta(a^*c^*z)\xi(z) | \xi(ca)\rangle\\
&= \sum_{ca \in Y_1, z \in X \setminus Y_1} 2\mathfrak{Re} \langle V_z\xi(z) | V_c \Theta(a) \xi(a)\rangle.
\end{align*}
Lastly we have
\begin{align}
\sum_{x, x' \in Y_1} \langle \Theta(x^*x')\xi(x') | \xi(x)\rangle & = \sum_{x=ca, x'=c'a' \in Y_1} \langle \Theta(a^*c^*c'a') \xi(c'a') | \xi(ca)\rangle \notag\\
& = \sum_{ca,c'a' \in Y_1} \langle \Theta(a^*a')\Theta(c^*c') \xi(c'a') | \xi(ca)\rangle \label{11}\\
& \geq \sum_{ca,c'a' \in Y_1} \langle \Theta(a^*)\Theta(a')\Theta(c^*c') \xi(c'a') | \xi(ca)\rangle \label{12}\\
&= \Big|\Big|\sum_{ca,c'a' \in Y_1} V_c\Theta(a) \xi(ca)\Big|\Big|^2 \notag
\end{align}
where \eqref{11} follows from the fact that $\Gamma$ is complete, and \eqref{12} follows from the classical Schwarz Inequality applied to the ucp map $\theta_{v_0}$ combined with Lemma IV.4.24 in \cite{takesaki}.  Combining these observations yields
\begin{align*}
&\sum_{x,y \in X} \langle \Theta(x^*y)\xi(y) | \xi(x)\rangle\\
 &\geq  \Big|\Big| \sum_{w \in X \setminus Y_1} V_w \xi(w)\Big|\Big|^2 + \Big|\Big|\sum_{ca,c'a' \in Y_1} V_c\Theta(a) \xi(ca)\Big|\Big|^2 \\
 &+  \sum_{ca \in Y_1, z \in X \setminus Y_1} 2\mathfrak{Re} \langle V_z\xi(z) | V_c \Theta(a) \xi(ca)\rangle\\ 
&= \Big|\Big|\sum_{w \in X \setminus Y_1} V_w \xi(w) + \sum_{ca,c'a' \in Y_1} V_c\Theta(a) \xi(ca)\Big|\Big|^2\\
&\geq 0.
\end{align*}

\section{Consequences}\label{cons}

\subsection{Reduced version}

We record the graph product version of Proposition 2.1 in \cite{choda}.  As in the amalgamated free product case, this result follows directly from Theorem \ref{mainthm}.  It should be noted that although the reduced version follows directly from Boca's result in the amlagamated free product setting, Choda's approach explicitly constructs a dilation on a Hilbert space containing the free product Hilbert space.  We present the graph product version as a direct corollary to Theorem \ref{mainthm}, but it is not unreasonable to expect that one can give a graph product adaptation of Choda's proof.

\begin{cor}\label{choda}
Let $\Gamma = (V, E)$ be a graph, and for each $v\in V$ let $\mc{A}_v$ and $\mc{B}_v$ be unital $C^*$-algebras with states $\varphi_v \in S(\mc{A}_v)$ and $\psi_v \in S(\mc{B}_v)$.  For each $v \in V$ let $\theta_v: \mc{A}_v \rightarrow \mc{B}_v$ be a unital completely positive map with $\psi_v \circ \theta_v = \varphi_v$.  Then there exists a unital completely positive map $\bigstar_\Gamma \theta_v: \bigstar_\Gamma \mc{A}_v \rightarrow \bigstar_\Gamma (\mc{B}_v, \psi_v)$ such that 
\begin{enumerate}
\item\label{A} $\bigstar_\Gamma \psi_v \circ \bigstar_\Gamma \theta_v = \bigstar_\Gamma \varphi_v$;
\item $\bigstar_\Gamma \theta_v (a_1\cdots a_n) = \theta_{v_1}(a_1)\cdots \theta_{v_n}(a_n)$ for $a_j \in \mathring{\mc{A}}_{v_j}, (v_1,\dots,v_n) \in \mc{W}_\text{red}$.
\end{enumerate}
\end{cor}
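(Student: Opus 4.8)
The plan is to deduce Corollary \ref{choda} directly from Theorem \ref{mainthm}, taking the target algebra $\mc{B}$ of that theorem to be the reduced graph product $\bigstar_\Gamma(\mc{B}_v,\psi_v)$ itself. For each $v \in V$, let $(\pi_v,\mc{H}_v,\xi_v)$ be the GNS triple of $\psi_v$ and let $\kappa_v := \lambda_v\circ\pi_v: \mc{B}_v \to \bigstar_\Gamma(\mc{B}_v,\psi_v)$ be the canonical unital $*$-embedding of the $v$-th vertex algebra into the reduced product. I would set $\tilde\theta_v := \kappa_v\circ\theta_v: \mc{A}_v \to \bigstar_\Gamma(\mc{B}_v,\psi_v)$. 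Each $\tilde\theta_v$ is unital completely positive, being the composite of a ucp map with a unital $*$-homomorphism. Moreover, by construction of the reduced graph product the images $\kappa_v(\mc{B}_v)$ and $\kappa_{v'}(\mc{B}_{v'})$ commute whenever $(v,v') \in E$ (this is precisely condition (1) of graph independence), so $\tilde\theta_v(\mc{A}_v)$ and $\tilde\theta_{v'}(\mc{A}_{v'})$ commute along edges. Thus the hypotheses of Theorem \ref{mainthm} are met with respect to the fixed states $\varphi_v$, and we obtain a ucp map $\Theta := \bigstar_\Gamma\tilde\theta_v: \bigstar_\Gamma\mc{A}_v \to \bigstar_\Gamma(\mc{B}_v,\psi_v)$. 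I then declare $\bigstar_\Gamma\theta_v := \Theta$ and verify properties (1) and (2).

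Property (2) is immediate from the defining relation \eqref{ucpdef}. The key observation is that the compatibility hypothesis $\psi_v\circ\theta_v = \varphi_v$ forces $\theta_v(\mathring{\mc{A}}_v)\subseteq \mathring{\mc{B}}_v$: for $a \in \ker\varphi_v$ we have $\psi_v(\theta_v(a)) = \varphi_v(a) = 0$. Hence for a reduced word $a_1\cdots a_n$ with $a_j\in\mathring{\mc{A}}_{v_j}$ and $(v_1,\dots,v_n)\in\mc{W}_\text{red}$, relation \eqref{ucpdef} yields $\Theta(a_1\cdots a_n) = \theta_{v_1}(a_1)\cdots\theta_{v_n}(a_n)$, which is a genuine reduced word in $\bigstar_\Gamma(\mc{B}_v,\psi_v)$ since each $\theta_{v_j}(a_j)\in\mathring{\mc{B}}_{v_j}$.

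For property (1), I would check the identity $\bigstar_\Gamma\psi_v\circ\Theta = \bigstar_\Gamma\varphi_v$ on the dense span of $\mc{W}_\text{red}\cup\{1\}$ and then invoke continuity. On $1$ both states return $1$. On a reduced word $z = a_1\cdots a_n$ ($n\geq 1$, $a_j\in\mathring{\mc{A}}_{v_j}$) the graph product state satisfies $(\bigstar_\Gamma\varphi_v)(z) = 0$, because $z$ is reduced with respect to $\varphi$ and the vertex algebras are graph independent in $(\bigstar_\Gamma\mc{A}_v,\bigstar_\Gamma\varphi_v)$. On the other side, by property (2), $\Theta(z) = \theta_{v_1}(a_1)\cdots\theta_{v_n}(a_n)$ is a reduced word in $(\bigstar_\Gamma(\mc{B}_v,\psi_v),\bigstar_\Gamma\psi_v)$, and since $\bigstar_\Gamma\psi_v$ is the vector state at $\Omega$ it annihilates reduced words; thus $(\bigstar_\Gamma\psi_v)(\Theta(z)) = 0$. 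The two sides therefore agree. One can alternatively phrase this via Lemma \ref{stres}: the restrictions of $\bigstar_\Gamma\psi_v\circ\Theta$ and of $\bigstar_\Gamma\varphi_v$ to each $\mc{A}_v$ both equal $\varphi_v$, using $\psi_v\circ\theta_v=\varphi_v$ together with $\bigstar_\Gamma\psi_v\circ\kappa_v = \psi_v$, so the two $\Gamma$-independent states coincide on the generated algebra.

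I expect no serious obstacle, since the analytic content has already been extracted into Theorem \ref{mainthm}; the proof is genuinely a corollary. The points requiring care are bookkeeping: identifying each $\mc{B}_v$ inside the reduced product through $\kappa_v$, confirming $\bigstar_\Gamma\psi_v\circ\kappa_v = \psi_v$ (a one-line GNS computation, $\langle\lambda_v(\pi_v(b))\Omega|\Omega\rangle = \langle\pi_v(b)\xi_v|\xi_v\rangle = \psi_v(b)$), and noting that the kernel condition propagates from $\varphi_v$ to $\psi_v$. The most delicate step, if any, is the verification of property (1), where one must be certain that both graph product states genuinely annihilate reduced words; this is exactly where the hypothesis $\psi_v\circ\theta_v = \varphi_v$ does its essential work, guaranteeing that $\Theta$ sends $\varphi$-reduced words to $\psi$-reduced words.
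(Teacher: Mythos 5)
Your proposal is correct and takes essentially the same approach as the paper: apply Theorem \ref{mainthm} with target $\mc{B} = \bigstar_\Gamma (\mc{B}_v, \psi_v)$, viewing each $\theta_v$ through the canonical embedding of $\mc{B}_v$ into the reduced product, and obtain property (1) via Lemma \ref{stres} (equivalently, your direct check that both graph product states annihilate reduced words, which uses $\psi_v \circ \theta_v = \varphi_v$). The paper's proof is a two-line version of exactly this argument, and your write-up simply makes explicit the bookkeeping it leaves implicit.
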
 

\begin{proof}
Take $\bigstar_\Gamma \theta_v$ to be the graph product ucp map as in in \ref{mainthm} defined with respect to the states $\varphi_v$. Part \eqref{A} follows from Lemma \ref{stres}.
\end{proof}


\subsection{Graph products of positive-definite functions}

We show here that the graph product of positive-definite functions is positive-definite itself.  This is a graph product version of Theorem 7.1 in \cite{boz}.

\begin{dfn}
Let $G$ be a group and $\mc{H}$ be a Hilbert space.  A function $f: G \rightarrow B(\mc{H})$ is \emph{positive-definite} if for every finite subset $\left\{g_1,\dots, g_n\right\} \subset G$, the $n\times n$ matrix \[\left[ f(g_i^{-1}g_j)\right]_{ij}\] is positive.
\end{dfn}

\begin{dfn}\label{gpposdef}
Let $\mc{H}$ be a Hilbert space, and for each $v \in V$, let $G_v$ be a group and $f_v: G_v \rightarrow B(\mc{H})$ be positive-definite with $f_v(e) = 1$.  If $(v,v') \in E \Rightarrow f_v(G_v)$ and $f_{v'}(G_{v'})$ commute, then we define the graph product of the $f_v$'s, $\bigstar_\Gamma f_v: \bigstar_\Gamma G_v \rightarrow B(\mc{H})$, as follows. 
\begin{enumerate}
\item $\bigstar_\Gamma f_v (e) = 1$;
\item if for $1 \leq k \leq n,\; g_k \in G_{v_k}\setminus\left\{1\right\}$ and $(v_1,\dots,v_n) \in \mc{W}_\text{red}$, then \[\bigstar_\Gamma f_v (g_1\cdots g_n) := f_{v_1}(g_1)\cdots f_{v_n}(g_n).\]
 \end{enumerate} 
\end{dfn}

It is well-known that there is a 1-1 correspondence between positive-definite functions $f:G \rightarrow B(\mc{H}), f(e) =1$   and ucp maps $\theta: C^*(G) \rightarrow B(\mc{H})$ in the following sense.  If $u_g \in C^*(G)$ denotes the unitary corresponding to the group element $g \in G$, then
\begin{align*}
f &\rightarrow \theta_f(u_g) := f(g)\\
f_\theta(g):= \theta(u_g) &\leftarrow \theta.
\end{align*}

\begin{thm}\label{posdef}
Let $G_v, f_v$ and $\mc{H}$ be as in Definition \ref{gpposdef}.  Then $\bigstar_\Gamma f_v$ is positive-definite.
\end{thm}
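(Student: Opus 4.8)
The plan is to deduce Theorem \ref{posdef} from Theorem \ref{mainthm} using the correspondence between positive-definite functions and ucp maps recorded just above the statement. First I would translate the data: for each $v \in V$ let $\theta_v := \theta_{f_v} : C^*(G_v) \to B(\mc{H})$ be the ucp map determined by $\theta_v(u_g) = f_v(g)$, which is unital since $f_v(e) = 1$. The hypothesis that $f_v(G_v)$ and $f_{v'}(G_{v'})$ commute whenever $(v,v') \in E$ says exactly that $\theta_v(u_g)$ and $\theta_{v'}(u_h)$ commute for all $g \in G_v$, $h \in G_{v'}$; since the $u_g$ span a dense $*$-subalgebra of $C^*(G_v)$, bilinearity and continuity upgrade this to commutation of the full images $\theta_v(C^*(G_v))$ and $\theta_{v'}(C^*(G_{v'}))$. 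Thus, taking $\mc{A}_v := C^*(G_v)$ and $\mc{B} := B(\mc{H})$, the family $\{\theta_v\}$ satisfies the hypotheses of Theorem \ref{mainthm}.

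To apply that theorem I must fix states. On each $\mc{A}_v = C^*(G_v)$ I would take $\varphi_v$ to be the canonical trace, i.e. $\varphi_v(u_g) = \delta_{g,e}$, so that $u_g \in \mathring{\mc{A}}_v = \ker \varphi_v$ precisely when $g \neq e$. Theorem \ref{mainthm} then produces a ucp map $\Theta := \bigstar_\Gamma \theta_v : \bigstar_\Gamma C^*(G_v) \to B(\mc{H})$. The structural step is the identification $\bigstar_\Gamma C^*(G_v) \cong C^*(\bigstar_\Gamma G_v)$, which I would obtain by matching universal properties: the maps $G_v \hookrightarrow \bigstar_\Gamma G_v \to C^*(\bigstar_\Gamma G_v)$ integrate to $*$-homomorphisms $C^*(G_v) \to C^*(\bigstar_\Gamma G_v)$ with commuting images across edges, inducing a map out of $\bigstar_\Gamma C^*(G_v)$ via Definition \ref{unidef}; conversely the commuting unitary representations $G_v \to U(\bigstar_\Gamma C^*(G_v))$ integrate (by the universal property of the group graph product) to $\bigstar_\Gamma G_v \to U(\bigstar_\Gamma C^*(G_v))$, hence to a $*$-homomorphism out of $C^*(\bigstar_\Gamma G_v)$, and the two are mutually inverse on generators.

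Under this identification $\Theta$ is a ucp map on $C^*(\bigstar_\Gamma G_v)$, so $g \mapsto \Theta(u_g)$ is automatically positive-definite: for any $g_1,\dots,g_n$ the matrix $[\Theta(u_{g_i}^* u_{g_j})]_{ij} = [\Theta(u_{g_i^{-1}g_j})]_{ij}$ is positive by complete positivity of $\Theta$. It then remains to check $\Theta(u_g) = \bigstar_\Gamma f_v(g)$, which I would verify on reduced words. If $g = g_1 \cdots g_n$ with $g_k \in G_{v_k}\setminus\{e\}$ and $(v_1,\dots,v_n) \in \mc{W}_\text{red}$, then each $u_{g_k} \in \mathring{\mc{A}}_{v_k}$, so \eqref{ucpdef} gives $\Theta(u_{g_1}\cdots u_{g_n}) = \theta_{v_1}(u_{g_1})\cdots \theta_{v_n}(u_{g_n}) = f_{v_1}(g_1)\cdots f_{v_n}(g_n)$, which agrees with Definition \ref{gpposdef}; the case $g = e$ is immediate. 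Hence $\bigstar_\Gamma f_v = \Theta(u_{(\cdot)})$ is positive-definite.

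I expect the main obstacle to be the careful justification of the identification $\bigstar_\Gamma C^*(G_v) \cong C^*(\bigstar_\Gamma G_v)$ together with confirming that the group-theoretic normal form $g_1\cdots g_n$ corresponds exactly to a C*-algebraic reduced word in the sense of Definition \ref{redv} (with each $u_{g_k}$ centered by the choice $\varphi_v = $ canonical trace), so that \eqref{ucpdef} genuinely computes $\bigstar_\Gamma f_v$; everything else is a formal application of Theorem \ref{mainthm} and the stated duality.
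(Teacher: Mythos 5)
Your proposal is correct and takes essentially the same route as the paper: both pass to the ucp maps $\theta_{f_v}$ on $C^*(G_v)$, fix the canonical traces as the distinguished states, apply Theorem \ref{mainthm}, and read off positive-definiteness from the ucp/positive-definite correspondence. The only difference is one of detail—you spell out the identification $\bigstar_\Gamma C^*(G_v) \cong C^*(\bigstar_\Gamma G_v)$ and the reduced-word verification that $\Theta(u_g) = \bigstar_\Gamma f_v(g)$, which the paper compresses into ``it is easy to check.''
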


\begin{proof}
Let $\bigstar_\Gamma \theta_{f_v}$ be the graph product of the ucp maps on $C^*(G_v)$ corresponding to $f_v$ defined with respect to states given by the canonical traces (from the left-regular representation) on $C^*(G_v)$.  By Theorem \ref{mainthm}, $\bigstar_\Gamma \theta_{f_v}$ is ucp. Then it is easy to check that $f_{\bigstar_\Gamma \theta_{f_v}} = \bigstar_\Gamma f_v$.
\end{proof}

\subsection{Unitary dilation}

We conclude the paper with some results on unitary dilation in the graph product context.  Consider the following version of the Sz.-Nagy-Foia\lfhook{s} dilation theorem.

\begin{thm}\label{dil}
Let $\Gamma = (V,E)$ be a graph.  Let $\mc{H}$ be a Hilbert space and $\left\{T_v \right\}_{v \in V} \subset B(\mc{H})$ be contractions such that if $(v,v') \in E$ then $T_v$ and $T_{v'}$ doubly commute ($[T_v,T_{v'}] = [T_v^*,T_{v'}] = 0$).  Then there exist a Hilbert space $\mc{K}$ containing $\mc{H}$ and unitaries $U_v \in B(\mc{K})$ for each $v \in V$ such that for any polynomial $p \in \mb{C}\langle X_v \rangle_{v\in V}$ in $|V|$ non-commuting indeterminates we have \[ p(\left\{A_v\right\}_{v\in V}) = P_\mc{H} p(\left\{U_v\right\}_{v\in V}) |_\mc{H}.\]
\end{thm}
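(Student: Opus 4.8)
The plan is to reduce the dilation problem to Theorem \ref{posdef} by encoding each contraction as a positive-definite function on a copy of $\mb{Z}$. For each $v \in V$ let $G_v := \mb{Z}_v = \langle s_v\rangle$ and define $f_v \colon G_v \to B(\mc{H})$ by $f_v(s_v^n) = T_v^n$ for $n \geq 0$ and $f_v(s_v^{-n}) = (T_v^*)^n$ for $n > 0$. The classical Sz.-Nagy dilation theorem is precisely the assertion that such an $f_v$ is positive-definite (and of course $f_v(e) = I$); equivalently, each single contraction power-dilates to a unitary. The group $\bigstar_\Gamma G_v$ is then a right-angled Artin group, and a monomial $X_{w_1}\cdots X_{w_N}$ in the non-commuting indeterminates corresponds to the group element $s_{w_1}\cdots s_{w_N}$.

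Next I would verify the hypothesis of Definition \ref{gpposdef}. If $(v,v') \in E$, then by assumption $[T_v, T_{v'}] = [T_v^*, T_{v'}] = 0$, and taking adjoints also yields $[T_v, T_{v'}^*] = [T_v^*, T_{v'}^*] = 0$; hence the algebra generated by $\{T_v, T_v^*\}$ commutes with that generated by $\{T_{v'}, T_{v'}^*\}$. Since $f_v(G_v)$ and $f_{v'}(G_{v'})$ lie in these algebras, they commute, as required. Theorem \ref{posdef} then gives that $\bigstar_\Gamma f_v \colon \bigstar_\Gamma G_v \to B(\mc{H})$ is positive-definite with value $I$ at the identity. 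Passing to the corresponding unital completely positive map $\theta := \theta_{\bigstar_\Gamma f_v}\colon C^*(\bigstar_\Gamma G_v)\to B(\mc{H})$ under the $1$-$1$ correspondence recorded before Theorem \ref{posdef}, Stinespring's theorem furnishes a Hilbert space $\mc{K}$, a $*$-representation $\pi$, and an isometry $V\colon \mc{H}\to\mc{K}$ with $\theta(\cdot) = V^*\pi(\cdot)V$; identifying $\mc{H}$ with $V\mc{H}\subseteq\mc{K}$, we obtain unitaries $U_v := \pi(u_{s_v})$ on $\mc{K}$ satisfying $(\bigstar_\Gamma f_v)(g) = \theta(u_g) = P_\mc{H}\,\pi(u_g)|_\mc{H}$ for every $g$.

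It remains to verify the compression formula, which I expect to be the main (if largely bookkeeping) obstacle. For a monomial $X_{w_1}\cdots X_{w_N}$, the corresponding group element $g = s_{w_1}\cdots s_{w_N}$ is a \emph{positive} word, so its reduction in $\bigstar_\Gamma G_v$ uses only shuffles coming from edges and merging of equal adjacent generators into positive powers---never a cancellation against an inverse. Writing the reduced form $g = h_1\cdots h_n$ with $h_j = s_{v_j}^{k_j}$ and $k_j > 0$, Definition \ref{gpposdef} gives $(\bigstar_\Gamma f_v)(g) = f_{v_1}(h_1)\cdots f_{v_n}(h_n) = T_{v_1}^{k_1}\cdots T_{v_n}^{k_n}$, in which no adjoint appears. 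Because the commutations used in the reduction correspond to edges, and edges force the relevant $T_v$'s to commute, this product equals the naive evaluation $T_{w_1}\cdots T_{w_N}$. Therefore $P_\mc{H}\,U_{w_1}\cdots U_{w_N}|_\mc{H} = P_\mc{H}\,\pi(u_g)|_\mc{H} = (\bigstar_\Gamma f_v)(g) = T_{w_1}\cdots T_{w_N}$, and the claimed identity $p(\{T_v\}) = P_\mc{H}\,p(\{U_v\})|_\mc{H}$ follows by linearity over the monomials of $p$. The delicate point is exactly this last verification: one must argue that a positive word reduces without cancellation, so that no $T_v^*$ ever surfaces, and that the edge-commutations invoked in the reduction are consistent with the commuting of the $T_v$'s---this is where the graph structure is genuinely used.
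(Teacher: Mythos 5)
Your proof is correct and is essentially the paper's own argument: the paper likewise applies Sz.-Nagy vertex-by-vertex and then the graph product theorem, only it invokes Theorem \ref{mainthm} directly on the ucp maps $\theta_v$ on $C^*(\mb{Z})$ (defined with respect to the canonical traces) rather than passing through positive-definite functions and Theorem \ref{posdef}, which is itself just Theorem \ref{mainthm} in that language, before concluding via Stinespring. Your closing verification that positive words reduce without cancellation (so no adjoints appear and the edge-commutations match those of the $T_v$'s) is a careful write-up of the step the paper compresses into the phrase \emph{does the job}.
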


\begin{proof}
 By Stinespring, we will be done if we obtain a ucp map $\Theta: \bigstar_\Gamma C^*(\mb{Z})  \rightarrow B(\mc{H})$ such that $\Theta(p((x_v)) = p((T_v))$.  Indeed, let $U_v$ be the image of $x_v$ under the resulting Stinespring representation.  
 
 Define the ucp map $\theta_{v}$ on the $v^\text{th}$ copy of $C^*(\mb{Z})$ as follows. \[\theta_{v}(x_v^m) = \left\{ \begin{array}{lcr} T_v^m & \text{if} & m \geq 0\\ (T_v^*)^{-m} & \text{if} & m <0\end{array}\right.\]  (This map is ucp by Sz.-Nagy's unitary dilation theorem).  Then the map $\Theta = \bigstar_\Gamma \theta_{v} : \bigstar_\Gamma C^*(\mb{Z}) = C^*(\bigstar_\Gamma \mb{Z}) \rightarrow B(\mc{H})$ defined with respect to the canoncial trace on $C^*(\mb{Z})$ does the job.
\end{proof}

\begin{rmk}
It should be emphasized that the doubly commuting assumption is important for the above theorem.  In particular, Op\v{e}la showed in Theorem 2.3 of \cite{opela} that if $\Gamma = (V, E)$ is a graph with $n \in \mb{N}$ vertices containing a cycle (a closed path of edges) then there are contractions $T_1, \dots, T_n$ such that if $(v_i,v_j) \in E$ then $[T_i,T_j] =0$ (not doubly commuting) with no simultaneous unitary dilation.  On the other hand, if $\Gamma$ has no cycles, then plain (single) commutation relations according to $\Gamma$ can be dilated.
\end{rmk}

The following corollary is a graph product version of Theorem 8.1 of \cite{boz} and follows immediately from Theorem \ref{dil}.  First a definition is in order.

\begin{dfn}
Given a graph $\Gamma = (V,E)$, let $\bigstar_\Gamma \mb{Z}$ denote the graph product group $\bigstar_\Gamma G_v$ where $G_v = \mb{Z}$ for every $v \in V$.  This is the graph product analog of $\mb{F}_n$.
\end{dfn}

\begin{cor}\label{vnineq}
Let $\Gamma = (V,E)$ be a graph.  Let $\mc{H}$ be a Hilbert space and $\left\{T_v \right\}_{v \in V} \subset B(\mc{H})$ be contractions such that if $(v,v') \in E$ then $T_v$ and $T_{v'}$ doubly commute ($[T_v,T_{v'}] = [T_v^*,T_{v'}] = 0$).  Let $p \in \mb{C}\langle X_v\rangle_{v\in V}$ be a polynomial in $|V|$ non-commuting indeterminates.  Then \[||p(\left\{T_v\right\}_{v \in V})|| \leq ||p(\left\{x_v\right\}_{v\in V})||_{C^*(\bigstar_\Gamma \mb{Z})}\] where for each $v \in V$, $x_v$ denotes the unitary corresponding to the canonical generator of the $v^\text{th}$ copy of $\mb{Z}$.
\end{cor}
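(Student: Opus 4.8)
The plan is to read the inequality straight off the dilation theorem (Theorem \ref{dil}) together with the elementary facts that $*$-homomorphisms and compressions by projections are norm non-increasing. Since Theorem \ref{dil} already carries all of the analytic content (and that in turn rests on the main theorem), the corollary is essentially bookkeeping; the only point requiring attention is to track \emph{which} norm appears on the right-hand side.

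First I would apply Theorem \ref{dil} to the family $\{T_v\}_{v\in V}$, obtaining a Hilbert space $\mc{K}\supseteq\mc{H}$ and unitaries $U_v\in B(\mc{K})$ with
\[p(\{T_v\}_{v\in V}) = P_\mc{H}\, p(\{U_v\}_{v\in V})\,|_\mc{H}\]
for every $p\in\mb{C}\langle X_v\rangle_{v\in V}$. The observation I would make explicit is that these $U_v$ are not arbitrary: by the construction in the proof of Theorem \ref{dil} they arise as $U_v = \pi(x_v)$, where $\pi$ is the Stinespring $*$-representation of $C^*(\bigstar_\Gamma\mb{Z}) = \bigstar_\Gamma C^*(\mb{Z})$ on $\mc{K}$ associated to the graph-product ucp map $\Theta = \bigstar_\Gamma\theta_v$. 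Because $\pi$ is a unital $*$-homomorphism and $p$ is a noncommutative polynomial, multiplicativity and linearity give $p(\{U_v\}_{v\in V}) = \pi\big(p(\{x_v\}_{v\in V})\big)$.

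I would then chain two contractivity estimates. Since every $*$-homomorphism between $C^*$-algebras is norm non-increasing,
\[\big\|p(\{U_v\}_{v\in V})\big\| = \big\|\pi\big(p(\{x_v\}_{v\in V})\big)\big\| \leq \big\|p(\{x_v\}_{v\in V})\big\|_{C^*(\bigstar_\Gamma\mb{Z})}.\]
Compressing by the orthogonal projection $P_\mc{H}$ and restricting to $\mc{H}$ can only decrease the norm, so
\[\big\|p(\{T_v\}_{v\in V})\big\| = \big\|P_\mc{H}\,p(\{U_v\}_{v\in V})\,|_\mc{H}\big\| \leq \big\|p(\{U_v\}_{v\in V})\big\| \leq \big\|p(\{x_v\}_{v\in V})\big\|_{C^*(\bigstar_\Gamma\mb{Z})},\]
which is exactly the asserted inequality.

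There is no genuine obstacle here, only one step that deserves care rather than effort: one must confirm that the dilating unitaries genuinely implement a representation of the \emph{universal} graph product $C^*$-algebra $C^*(\bigstar_\Gamma\mb{Z})$, so that the norm on the right is the universal one named in the statement. This is immediate from the Stinespring step in the proof of Theorem \ref{dil}, where the $U_v$ are defined precisely as the images of the canonical generators $x_v$ under the dilating representation $\pi$; consequently the bound $\|\pi(\,\cdot\,)\|\le\|\cdot\|_{C^*(\bigstar_\Gamma\mb{Z})}$ is the universal-norm estimate, and no further argument is needed.
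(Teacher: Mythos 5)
Your proof is correct and is exactly the argument the paper intends: the paper gives no separate proof, asserting only that the corollary ``follows immediately from Theorem \ref{dil},'' and your chain of estimates---dilation, then contractivity of the Stinespring representation $\pi$ of $C^*(\bigstar_\Gamma \mb{Z})$ applied to $p(\{x_v\}_{v\in V})$, then contractivity of compression by $P_\mc{H}$---is precisely the bookkeeping that justifies that assertion. Your flagged point of care, that the dilating unitaries $U_v$ arise as $\pi(x_v)$ so that the right-hand norm is genuinely the universal norm on $C^*(\bigstar_\Gamma \mb{Z})$, is indeed the one detail worth making explicit, and it is settled exactly as you say by the construction in the proof of Theorem \ref{dil}.
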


\begin{rmk}
Note that by the universality of $C^*(\mb{F}_{|V|})$ we have \[||p||_{C^*(\bigstar_\Gamma \mb{Z})} \leq ||p||_{C^*(\mbf{F}_{|V|})}.\]  
\end{rmk}

Lastly, we have a version of Theorem \ref{dil} viewed through the lens of non-commutative probability.  The statement and proof are simple adaptations of the free versions presented in \cite{sacr}.

\begin{thm}\label{gpdilation}
 Given a graph $\Gamma = (V, E)$ and $\Gamma$ independent contractions $\left\{T_v\right\}_{v \in V}$ in the noncommutative probability space $(B(\mc{H}), \varphi)$, there exist a Hilbert space $\mc{K}$ containing $\mc{H}$ and unitaries $\left\{U_v\right\}_{v\in V} \subset B(\mc{K})$ that are $\Gamma$ independent with respect to $\varphi \circ \text{Ad}(P_\mc{H})$ such that for any polynomial $p \in \mb{C}\langle X_v \rangle_{v\in V}$ in $|V|$ non-commuting indeterminates we have \[ p(\left\{T_v\right\}_{v\in V}) = P_\mc{H} p(\left\{U_v\right\}_{v\in V}) |_\mc{H}.\]  Furthermore, this dilation is unique up to unitary equivalence if $\mc{K}$ is minimal.
\end{thm}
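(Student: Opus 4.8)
The plan is to produce the entire dilation from a single minimal Stinespring construction attached to the graph product of the individual Sz.-Nagy dilations, and then to read off the $\Gamma$ independence of the dilated unitaries directly from the defining relation \eqref{ucpdef}. The one genuinely delicate point is the choice of auxiliary states defining the graph product map: they must be pulled back from $\varphi$.

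First I would establish existence. For each $v$ let $\theta_v : C^*(\mb{Z}) \to B(\mc{H})$ be the Sz.-Nagy ucp map with $\theta_v(x_v^m) = T_v^m$ for $m \geq 0$ and $\theta_v(x_v^m) = (T_v^*)^{-m}$ for $m < 0$, so that its image lies in $\mc{B}_v := C^*(1,T_v)$. Since $\{T_v\}$ is $\Gamma$ independent, $\mc{B}_v$ and $\mc{B}_{v'}$ commute whenever $(v,v') \in E$, so the hypotheses of Theorem \ref{mainthm} hold. I would then form $\Theta := \bigstar_\Gamma \theta_v$ with respect to the states $\phi_v := \varphi \circ \theta_v \in S(C^*(\mb{Z}))$; by Theorem \ref{mainthm} this is ucp, and exactly as in the proof of Theorem \ref{dil} one verifies $\Theta(p(\{x_v\})) = p(\{T_v\})$. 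Passing to a minimal Stinespring dilation $\Theta = V^*\pi(\cdot)V$ on a Hilbert space $\mc{K}$ and identifying $\mc{H}$ with $V\mc{H}$ (so that $V^* = P_\mc{H}$), the elements $U_v := \pi(x_v)$ are unitaries satisfying $p(\{T_v\}) = P_\mc{H}\, p(\{U_v\})|_\mc{H}$.

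The core of the proof is $\Gamma$ independence of $\{U_v\}$ with respect to $\psi := \varphi \circ \text{Ad}(P_\mc{H})$. Commutation is immediate: for $(v,v') \in E$ the generators $x_v, x_{v'}$ commute in $\bigstar_\Gamma C^*(\mb{Z})$ and $\pi$ is multiplicative, so $U_v$ and $U_{v'}$ commute. For the vanishing-moment condition, observe that $\psi \circ \pi = \varphi \circ \Theta$. Suppose $a_1 \cdots a_m$ is reduced with respect to $\psi$, with $a_j \in C^*(1,U_{v_j})$, $\psi(a_j) = 0$, and $(v_1,\dots,v_m)$ reduced; write $a_j = \pi(\hat a_j)$ with $\hat a_j \in C^*(1,x_{v_j})$. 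Then $\phi_{v_j}(\hat a_j) = \varphi(\theta_{v_j}(\hat a_j)) = \psi(a_j) = 0$, so each $\hat a_j$ is $\phi_{v_j}$-centered and $\hat a_1 \cdots \hat a_m$ is a reduced word in the sense of \eqref{ucpdef}. Hence
\begin{align*}
\psi(a_1 \cdots a_m) = \varphi\big(\Theta(\hat a_1 \cdots \hat a_m)\big) = \varphi\big(\theta_{v_1}(\hat a_1) \cdots \theta_{v_m}(\hat a_m)\big).
\end{align*}
The decisive point is that $\varphi(\theta_{v_j}(\hat a_j)) = \phi_{v_j}(\hat a_j) = 0$, so each $\theta_{v_j}(\hat a_j)$ lies in $\mathring{\mc{B}}_{v_j} = \ker(\varphi|_{\mc{B}_{v_j}})$; thus $\theta_{v_1}(\hat a_1) \cdots \theta_{v_m}(\hat a_m)$ is a word in the $\Gamma$ independent algebras $\{\mc{B}_v\}$ that is reduced with respect to $\varphi$, whence its $\varphi$-value vanishes by $\Gamma$ independence of $\{T_v\}$. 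Therefore $\psi(a_1 \cdots a_m) = 0$.

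Uniqueness follows from uniqueness of the minimal Stinespring dilation: when $\mc{K}$ is minimal we have $\mc{K} = \overline{C^*(\{U_v\})\mc{H}} = \overline{\pi(\bigstar_\Gamma C^*(\mb{Z}))\mc{H}}$, so two minimal dilations give minimal Stinespring representations of the same $\Theta$, intertwined by a unitary that fixes $\mc{H}$ and carries each $U_v$ to $U_v'$. I expect the main obstacle to be pinning down the correct auxiliary states: the argument collapses unless $\Theta$ is formed from $\phi_v = \varphi \circ \theta_v$, for this is exactly what forces a $\phi_{v_j}$-centered letter to be sent by $\theta_{v_j}$ to a $\varphi$-centered element, so that $\Gamma$ independence of the contractions can be invoked. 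Confirming that this choice both satisfies the hypotheses of Theorem \ref{mainthm} and still yields $\Theta(p(\{x_v\})) = p(\{T_v\})$ is the step requiring the most care.
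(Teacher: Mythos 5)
Your proposal has the same architecture as the paper's proof: the Sz.-Nagy ucp maps $\theta_v$, the graph product ucp map $\Theta$ from Theorem \ref{mainthm}, a Stinespring representation $\pi$ with $U_v := \pi(x_v)$, reduction of the $\Gamma$ independence of $\{U_v\}$ to that of $\{T_v\}$ through the factorization \eqref{ucpdef}, and a deferred minimality argument. The one genuine difference is the choice of auxiliary states, and it matters. The paper (following Theorem \ref{dil}) builds $\Theta$ with respect to the canonical traces on the copies of $C^*(\mb{Z})$, whereas you build it with respect to $\phi_v = \varphi\circ\theta_v$. In the independence step one must apply \eqref{ucpdef} to the word $\hat a_1\cdots\hat a_m$, where all that is known about the letters is $\varphi(\theta_{v_j}(\hat a_j)) = \psi(a_j) = 0$, with $\psi := \varphi\circ\text{Ad}(P_\mc{H})$. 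With your states this is precisely the condition $\phi_{v_j}(\hat a_j)=0$ needed for the word to be reduced, so the factorization $\Theta(\hat a_1\cdots\hat a_m)=\theta_{v_1}(\hat a_1)\cdots\theta_{v_m}(\hat a_m)$ is immediate from the defining relation; with the traces the letters $\hat a_j$ need not be centered, and since $\theta_v$ is not multiplicative this factorization does not follow directly from \eqref{ucpdef}. Your choice thus renders transparent a step that the paper's write-up passes over, and this is a genuine gain.

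What your choice costs, as you yourself flag, is the dilation identity $\Theta(p(\{x_v\}))=p(\{T_v\})$: with the traces every monomial $x_{u_1}^{m_1}\cdots x_{u_k}^{m_k}$ ($m_j>0$) is already (after merging same-vertex letters) a reduced word of centered letters, so \eqref{ucpdef} applies on the nose, whereas with $\phi_v=\varphi\circ\theta_v$ the monomials are not centered and the identity requires proof. It is true, but you must supply the argument: expand each letter as $\mathring{x_{u_j}^{m_j}}+\phi_{u_j}(x_{u_j}^{m_j})1$, recombine same-vertex letters in the lower-order terms, and induct on the number of letters, using that $\theta_v$ is multiplicative on the span of the non-negative powers of $x_v$ (because $T_v^mT_v^n=T_v^{m+n}$ for $m,n\geq 0$) and that $\theta_v(\mc{A}_v)$ and $\theta_w(\mc{A}_w)$ commute when $(v,w)\in E$. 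Separately, your uniqueness sentence conceals the substantive point: for an arbitrary minimal dilation $(\mc{K}',\{U'_v\})$ satisfying the conclusion of the theorem, the compression $P_\mc{H}\pi'(\cdot)|_\mc{H}$ is known to agree with $\Theta$ only on adjoint-free polynomials in the $x_v$; to conclude that two minimal dilations are Stinespring representations of the same ucp map, you must use the assumed $\Gamma$ independence of $\{U'_v\}$ to control compressions of general reduced words. This is exactly the role played by Lemma \ref{stres} in the argument the paper cites, and it cannot be replaced by a bare appeal to uniqueness of minimal Stinespring dilations.
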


\begin{proof}
We use the same dilation as in Theorem \ref{dil}, letting $\pi: \bigstar_\Gamma C^*( \mb{Z}) \rightarrow B(\mc{K})$ denote the corresponding Stinespring representation; and for every $v \in V$ let $U_v = \pi(x_v)$ where $x_v$ is unitary corresponding to the canonical generator of the $v^\text{th}$ copy of $\mb{Z}$.  It remains to show the $\Gamma$ independence of $\left\{ U_v \right\}_{v\in V} \subset B(\mc{K})$ and uniqueness in the case that $\mc{K}$ is minimal.

To show that the random variables in $\left\{U_v\right\}_{v \in V}$ are $\Gamma$ independent with respect to $\varphi \circ \text{Ad}(P_\mc{H})$, let $a = a_1\cdots a_m$ where $a_j \in \mathring{C^*(U_{v_j})}$ for $1 \leq j \leq m$ be reduced with respect to $\varphi \circ \text{Ad}(P_\mc{H})$. For $1 \leq j \leq m$, let $b_j$ be an element of the $v_j^\text{th}$ copy of $C^*(\mb{Z})$ such that $\pi(b_j) = a_j$.  It follows that \[\bigstar_\Gamma \theta_v(b_1\cdots b_m) = \theta_{v_1}(b_1)\cdots \theta_{v_m}(b_m)\] is reduced with respect to $\varphi$.  Then by the $\Gamma$ independence of $\left\{T_v\right\}_{v\in V}$, we have
\begin{align*}
\varphi(P_\mc{H} a_1\cdots a_m |_\mc{H}) &= \varphi(P_\mc{H} \pi(b_1\cdots b_m)|_\mc{H}) \\
&= \varphi(\bigstar_\Gamma \theta_v (b_1\cdots b_m))\\
&= \varphi(\theta_{v_1}(b_1)\cdots \theta_{v_m}(b_m))\\
&=0.
\end{align*}

The minimality argument follows from the same argument presented in the proof of Theorem 3.2 in \cite{sacr} using Lemma \ref{stres} in place of Lemma 5.13 from \cite{spenic}.
\end{proof}

\begin{rmk}\hspace*{\fill}
\begin{enumerate}

\item If $\Gamma$ is complete then, as shown in \cite{nagy-foias, sacr}, we can take $p$ to be a $*$-polynomial.

\item By Theorem 1 in \cite{mlot}, we have that $\varphi \circ \text{Ad}(P_\mc{H})$ is tracial on $C^*(\left\{U_v\right\}_{v \in V})$.
\end{enumerate}
\end{rmk}

\subsection*{Acknowledgments}
Gratitude is due to Ben Hayes for initiating the author's interest in this subject and to David Sherman for valuable conversations about this project.  Also, the author would like to thank Andrew Sale for providing helpful information on the relevant group theoretic literature. Because of a gracious invitation, a portion of this article was completed during a June 2017 visit to the Centre de Recerca Matem\`{a}tica in Barcelona, Spain.

\bibliographystyle{plain}
\bibliography{gpucpbib}{}

\end{document}